\DeclareMathOperator{\GL}{GL}
\DeclareMathOperator{\Hom}{Hom}
\DeclareMathOperator{\Spec}{Spec}
\DeclareMathOperator{\der}{der}
\DeclareMathOperator{\Gm}{{\mathbf G}_m}
\DeclareMathOperator{\rad}{rad}
\DeclareMathOperator{\corad}{corad}
\DeclareMathOperator{\SL}{SL}
\newcommand{\id}{\text{\rm id}}
\DeclareMathOperator{\ZZ}{{\mathbb Z}}
\newtheorem{lem}{Lemma}[section]
\newtheorem*{thm*}{Theorem}
\newtheorem{thm}[lem]{Theorem}
\newtheorem{cor}[lem]{Corollary}
\newtheorem*{cor*}{Corollary}
\theoremstyle{definition}{  \newtheorem{rem}[lem]{Remark}  }
\theoremstyle{definition}{   }
\theoremstyle{definition}{  \newtheorem{dfn}[lem]{Definition} }
\DeclareMathOperator{\et}{\text{\'et}}
\newcommand{\st}{\scriptstyle}
\newcommand{\Aff}{\mathbb {A}}
\begin{document}

\title{Isotropic reductive groups over discrete Hodge algebras}
\author{Anastasia Stavrova}

\subjclass[2010]{19B28, 20G07, 20G15, 14L35, 14F20, 13C10, 19A99}

\keywords{Bass-Quillen conjecture, reductive group, $G$-torsor, non-stable $K_1$-functor, Whitehead group,
simple algebraic group, discrete Hodge algebra, Stanley-Reisner ring, Milnor square}
\thanks{The author is a grantee of the contest ``Young Russian Mathematics''. The work was supported
by the Russian Science Foundation grant 14-21-00035.
}
\address{Chebyshev Laboratory, St. Petersburg State University,
14th Line V.O. 29B, 199178 Saint Petersburg, Russia}
\email{anastasia.stavrova@gmail.com}

\maketitle

\begin{abstract}
Let $G$ be a reductive group over a commutative ring $R$. We say that $G$ has isotropic rank $\ge n$,
if every normal semisimple reductive $R$-subgroup of $G$ contains $(\Gm_{,R})^n$. We prove that if $G$ has isotropic rank $\ge 1$
and $R$ is a regular domain containing an infinite field $k$, then for
any discrete Hodge algebra $A=R[x_1,\ldots,x_n]/I$ over $R$,
the map $H^1_{Nis}(A,G)\to H^1_{Nis}(R,G)$ induced by evaluation at $x_1=\ldots=x_n=0$, is a bijection.
If $k$ has characteristic $0$, then, moreover,
the map $H^1_{\et}(A,G)\to H^1_{\et}(R,G)$ has trivial kernel. We also prove that
if $k$ is perfect, $G$ is defined over $k$, the isotropic rank of $G$ is $\ge 2$, and $A$ is square-free,
then $K_1^G(A)=K_1^G(R)$, where $K_1^G(R)=G(R)/E(R)$ is the corresponding
non-stable $K_1$-functor, also called the Whitehead group of $G$. The corresponging statements for $G=\GL_n$ were
previously proved by Ton Vorst.
\end{abstract}

\section{Introduction}

Let $R$ be a commutative ring with 1.
A commutative $R$-algebra $A$ is called a discrete Hodge algebra over $R$ if
$A=R[x_1,\ldots,x_n]/I$, where $I$ is an ideal generated by monomials. If $I$ is generated by square-free monomials,
 $A$ is called a square-free discrete Hodge algebra. The simplest example of such an algebra is $R[x,y]/xy$.
Square-free discrete Hodge algebras over a field are also called Stanley--Reisner rings.

Serre's conjecture on modules over polynomial rings, proved by D. Quillen and A. Suslin,
states that any finitely generated projective module over a polynomial ring over a field is free. More generally, the
Bass--Quillen conjecture~\cite{Bass73,Q-serreconj} states that for any regular ring $R$, every finitely generated projective module over
$R[x_1,\ldots,x_n]$ is extended from $R$. In~\cite{Vo-SeHo} T. Vorst proved that, once the Bass--Quillen conjecture holds for
$R$, then it also holds for any discrete Hodge algebra $A$ over $R$, i.e. every finitely generated projective $A$-module
is extended from $R$. Later, S. Mandal~\cite{Man85,Man86} used the same technique to extend
several earlier results on cancellation and extendability for modules
and quadratic spaces over polynomial rings to discrete Hodge algebras.

We generalize the observation of Vorst as follows. Grothendieck topologies are understood in the sense of~\cite[Tag 00ZD]{Stacks}.

\begin{thm}\label{thm:H1}
Let $G$ be a faithfully flat affine group scheme locally of finite presentation over a commutative ring $R$. Let $\tau$
be a Grothendieck topology on $R$-schemes, refined by the fppf topology.

(i) If $H^1_\tau(R[x_1,\ldots,x_n],G)\to H^1_\tau(R,G)$ has trivial kernel (respectively, is bijective)
for any $n\ge 1$, then for any square-free discrete Hodge algebra $A[x_1,\ldots,x_n]/I$ over $R$,
the canonical map $H^1_\tau(A,G)\to H^1_\tau(R,G)$
has trivial kernel (respectively, is bijective).

(ii) Assume in addition that $G$ is smooth. Then the claim of (i) holds for any discrete Hodge algebra $A$ over $R$.
\end{thm}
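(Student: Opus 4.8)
The plan is to reduce statement (ii) to statement (i), and then to prove (i) by induction on the number of variables, patching $G$-torsors along the Milnor squares that the Stanley--Reisner combinatorics provides.

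First I would dispose of (ii). Given an arbitrary discrete Hodge algebra $A=R[x_1,\ldots,x_n]/I$ with $I$ monomial, let $A_{\mathrm{sf}}=R[x_1,\ldots,x_n]/\sqrt I$ be the associated square-free (Stanley--Reisner) algebra. The ideal $\sqrt I/I$ is generated by finitely many monomials, each nilpotent in $A$, so $A\to A_{\mathrm{sf}}$ is surjective with nilpotent kernel. For $G$ smooth and $\tau$ refined by fppf, $H^1_\tau(-,G)$ is insensitive to nilpotent thickenings: the small $\tau$-sites of $A$ and $A_{\mathrm{sf}}$ coincide, and smoothness lets one lift both torsors and their isomorphisms along the thickening. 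Since $R\to A\to A_{\mathrm{sf}}$ and $A_{\mathrm{sf}}\to R$ are compatible with the evaluations at $x_1=\ldots=x_n=0$, the bijection $H^1_\tau(A,G)\cong H^1_\tau(A_{\mathrm{sf}},G)$ reduces (ii) for $A$ to (i) for $A_{\mathrm{sf}}$. This is the \emph{only} place smoothness is used.

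For (i), write $A=R[\Delta]$ for the Stanley--Reisner ring of the complex $\Delta$ on $\{x_1,\ldots,x_n\}$ whose faces are the square-free monomials outside $I$. I would induct on $n$, proving the assertion simultaneously over every base ring satisfying the hypothesis. The crucial observation is that the polynomial-ring hypothesis is inherited by $R[x_n]$: composing $H^1_\tau(R[x_n,z_1,\ldots,z_k],G)\to H^1_\tau(R,G)$ with the inverse of $H^1_\tau(R[x_n],G)\to H^1_\tau(R,G)$ shows that $H^1_\tau(R[x_n][z_1,\ldots,z_k],G)\to H^1_\tau(R[x_n],G)$ is again bijective (resp. has trivial kernel). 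Fixing a vertex $x_n$, the decomposition $\Delta=\mathrm{st}(x_n)\cup\mathrm{del}(x_n)$ with $\mathrm{st}(x_n)\cap\mathrm{del}(x_n)=\mathrm{lk}(x_n)$ gives a Milnor square $A=B\times_D C$, where $B=R[\mathrm{st}(x_n)]=D[x_n]$ (the closed star is the cone over the link with apex $x_n$), $C=R[\mathrm{del}(x_n)]$, $D=R[\mathrm{lk}(x_n)]$, the map $c\colon C\to D$ is the surjection onto the subcomplex, and $b\colon B\to D$, $x_n\mapsto0$, is a \emph{split} surjection. Here $C$ and $D$ are discrete Hodge algebras over $R$ in the $\le n-1$ variables $x_1,\ldots,x_{n-1}$, while $B=D[x_n]$ is a discrete Hodge algebra over $R[x_n]$ in $\le n-1$ variables; thus all three corners have strictly smaller complexity and the inductive hypothesis applies ($B$ over the base $R[x_n]$, and $C,D$ over $R$).

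The key technical input is a Mayer--Vietoris sequence of pointed sets for $H^1_\tau(-,G)$ along a Milnor square with $c$ surjective,
\[
G(A)\to G(B)\times G(C)\to G(D)\xrightarrow{\ \partial\ } H^1_\tau(A,G)\to H^1_\tau(B,G)\times H^1_\tau(C,G)\to H^1_\tau(D,G),
\]
together with the gluing statement that any pair with matching restriction to $D$ comes from a torsor on $A$. I expect this to be the main obstacle: unlike ordinary descent, $\Spec B\sqcup\Spec C\to\Spec A$ is a finite, non-flat, closed cover, so the gluing of $\tau$-torsors and of their automorphisms along $c$ must be carried out in the spirit of Milnor patching, and the hypothesis that $\tau$ is refined by fppf enters precisely in lifting $\tau$-trivializations along the surjection $c$. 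Granting the sequence, the split surjection $b$ makes $G(B)\to G(D)$ surjective, so $\partial$ is trivial; hence $H^1_\tau(A,G)\to H^1_\tau(B,G)\times H^1_\tau(C,G)$ has trivial kernel and, by the gluing, identifies $H^1_\tau(A,G)$ with the fibre product $H^1_\tau(B,G)\times_{H^1_\tau(D,G)}H^1_\tau(C,G)$.

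It remains to evaluate at the origin. Since $\mathrm{ev}_0\colon A\to R$ factors both as $A\xrightarrow{q}C\to R$ and through $B$, the inductive bijections $H^1_\tau(C,G)\cong H^1_\tau(R,G)$, $H^1_\tau(D,G)\cong H^1_\tau(R,G)$, and $H^1_\tau(B,G)\cong H^1_\tau(R[x_n],G)\cong H^1_\tau(R,G)$ carry both $b_*$ and $c_*$ to the identity of $H^1_\tau(R,G)$; the fibre product therefore collapses to $H^1_\tau(R,G)$, and the resulting bijection $H^1_\tau(A,G)\cong H^1_\tau(R,G)$ is exactly $\mathrm{ev}_{0*}$, settling the bijective case. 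For the trivial-kernel case one chases directly: if $\mathrm{ev}_{0*}\xi$ is trivial, then $q_*\xi$ is trivial over $C$ by induction, so $\xi_B=p_*\xi$ restricts trivially to $D$; pushing $\xi_B$ forward to $R[x_n]$ and then to $R$ and using the inherited trivial-kernel over $R[x_n]$ together with the inductive triviality over $R[x_n]$ shows $\xi_B$ is trivial as well, whereupon triviality of $\partial$ forces $\xi$ to be trivial. The base case $n=0$ is $A=R$, and the induction closes.
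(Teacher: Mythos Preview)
Your overall plan matches the paper's: Milnor patching of $G$-torsors along Vorst's star/link/deletion square, induction on the combinatorics, and reduction of (ii) to (i) via nilpotent invariance. The inductive bookkeeping is organised a bit differently---you induct on the number of variables and pass to the new base $R[x_n]$, whereas the paper inducts on Vorst's invariant $m_0(A)$ and, for the trivial-kernel statement, first proves $F(A[y])\to F(A)$ has trivial kernel and then applies a graded-ring trick (Lemma~\ref{lem:graded-bij}) instead of your direct chase. Both schemes are valid; your chase for the trivial-kernel case is actually a little more direct.

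There is, however, a genuine gap in your bijective case. From ``$\partial$ is trivial'' you only obtain that $H^1_\tau(A,G)\to H^1_\tau(B,G)\times H^1_\tau(C,G)$ has \emph{trivial kernel as a map of pointed sets}; for non-abelian $G$ this is far from injectivity, so the sentence ``and, by the gluing, identifies $H^1_\tau(A,G)$ with the fibre product'' is unjustified. Two torsors on $A$ with isomorphic restrictions to $B$ and $C$ differ by a class in the double-coset set $\Aut_C(\xi_C)\backslash \Aut_D(\xi_D)/\Aut_B(\xi_B)$, and collapsing that set requires knowing that the \emph{twisted} group ${}^{\xi_B}G(B)$ still surjects onto ${}^{\xi_D}G(D)$---true here because $b$ is split, but this is an extra argument, not a consequence of your exact sequence. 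The paper bypasses the whole issue with an extension lemma (Lemma~\ref{lem:ext}): once the inductive bijection tells you that the torsor on the split corner $B=D[x_n]$ is extended from $D$, the patched torsor on $A$ is automatically extended from $C$, so $H^1_\tau(A,G)\to H^1_\tau(C,G)$ is itself a bijection and the fibre-product description is never needed.

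A smaller point on (ii): you claim $H^1_\tau(A,G)\cong H^1_\tau(A_{\mathrm{sf}},G)$ because ``the small $\tau$-sites coincide''. For an arbitrary $\tau$ between Zariski and fppf this is not clear. The paper is more careful: it uses the fppf bijection (valid by smoothness) only to deduce that $H^1_\tau(A,G)\to H^1_\tau(A_{\mathrm{sf}},G)$ is \emph{injective}, which already suffices for the trivial-kernel claim; in the bijective case the missing surjectivity of $H^1_\tau(A,G)\to H^1_\tau(R,G)$ then comes for free from the section $R\to A$.
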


Let $G$ be a reductive group scheme over $R$ in the sense of~\cite{SGA3}. We say that $G$ has isotropic rank $\ge n$,
if every normal semisimple reductive $R$-subgroup of $G$ contains $(\Gm_{,R})^n$.
Analogs of the Bass--Quillen conjecture for reductive groups $G$ of isotropic rank $\ge 1$ have been established
in many cases, most notably, for tori over regular rings, and for reductive groups over regular
rings containing an infinite field, see~\cite{CTS,CTO,PaStV,AHW} and the references therein.
It is known that, at least for reductive groups over an infinite perfect field,
the isotropy condition is necessary for the Bass--Quillen conjecture to hold~\cite{BaSa17}.

Combining Theorem~\ref{thm:H1} with several of the above results and the infinite field case
of the Serre--Grothendieck conjecture~\cite{PaF}, we obtain the following analogs of the Bass--Quillen
conjecture over discrete Hodge algebras. Note that, formally, the Nisnevich cohomology $H^1_{Nis}(-,G)$ is larger than
the Zariski cohomology, however, the Serre--Grothendieck conjecture implies that they
coincide on regular domains. From this point on, we assume all rings to be Noetherian.

\begin{cor}\label{cor:torsors}
Let $G$ be a reductive group scheme over
a regular domain $R$ containing an infinite field $k$. Assume that $G$ has isotropic rank $\ge 1$.
Then for
any discrete Hodge algebra $A=R[x_1,\ldots,x_n]/I$ over $R$,
the map
$$
H^1_{Nis}(A,G)\to H^1_{Nis}(R,G),
$$
induced by evaluation at $x_1=\ldots=x_n=0$, is a bijection. If $k$ has characteristic $0$, then, moreover,
the map
$$
H^1_{\et}(A,G)\to H^1_{\et}(R,G)
$$
has trivial kernel.
\end{cor}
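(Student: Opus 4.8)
The plan is to obtain the statement as a direct application of Theorem~\ref{thm:H1}(ii), once with $\tau$ the Nisnevich and once with $\tau$ the étale topology. Since $G$ is reductive, it is smooth, affine, faithfully flat and of finite presentation over $R$, so it meets the standing hypotheses of Theorem~\ref{thm:H1} as well as the extra smoothness needed for part~(ii); moreover both the Nisnevich and the étale topologies are refined by the fppf topology. Because $I$ is generated by monomials in $x_1,\ldots,x_n$ with no constant term, the evaluation $x_i\mapsto 0$ is a well-defined ring homomorphism $A\to R$, split by the structure map $R\hookrightarrow A$; it induces the map in the statement and agrees with the canonical map of Theorem~\ref{thm:H1}. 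Functoriality then shows that $H^1_\tau(A,G)\to H^1_\tau(R,G)$ is always split surjective, so that in both assertions the only content is the injectivity of this map (for $\tau=Nis$), respectively the triviality of its kernel (for $\tau=\et$).

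By Theorem~\ref{thm:H1}(ii) it therefore suffices to verify, for every $n\ge 1$, the corresponding property for the polynomial ring $R[x_1,\ldots,x_n]$: that $H^1_{Nis}(R[x_1,\ldots,x_n],G)\to H^1_{Nis}(R,G)$ is bijective, and that in characteristic $0$ the kernel of $H^1_{\et}(R[x_1,\ldots,x_n],G)\to H^1_{\et}(R,G)$ is trivial. These are precisely the $\Aff^1$-invariance (homotopy invariance) statements for $G$-torsors over regular rings containing an infinite field. The Nisnevich bijectivity is the reductive analog of the Bass--Quillen conjecture proved in \cite{PaStV,AHW} (and \cite{CTS} for tori), and the étale triviality of the kernel is its characteristic-$0$ counterpart; I would import them in the $\Aff^1$-invariant form and iterate over the $n$ factors of $\Aff^n$, so that $G$ stays fixed over $R$ throughout. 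Alternatively one may argue by induction on $n$, writing $R[x_1,\ldots,x_n]=R'[x_n]$ with $R'=R[x_1,\ldots,x_{n-1}]$; then one must check that $R'$ is again a regular domain containing $k$ and that $G_{R'}$ still has isotropic rank $\ge 1$, the latter because adjoining a variable keeps the base connected while $\Gm$-subgroups base change to $\Gm$-subgroups.

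The infinite-field case of the Serre--Grothendieck conjecture \cite{PaF} is what guarantees that on the regular domains in play $H^1_{Nis}(-,G)$ coincides with the Zariski cohomology, and it is also the local-to-global ingredient behind the Nisnevich homotopy-invariance input above. Granting these two inputs, Theorem~\ref{thm:H1}(ii) applied to $\tau=Nis$ yields the bijectivity of $H^1_{Nis}(A,G)\to H^1_{Nis}(R,G)$, and applied to $\tau=\et$ with $k$ of characteristic $0$ yields the triviality of the kernel of $H^1_{\et}(A,G)\to H^1_{\et}(R,G)$, for every discrete Hodge algebra $A$ over $R$. This is exactly the assertion of the corollary.

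The genuine difficulty is entirely absorbed into the cited homotopy-invariance theorems; the present argument is an assembly. Within it, the two points that I expect to require care are the stability of the hypotheses under adjoining a variable---regularity and containment of $k$ are immediate, but the isotropic rank of $G_{R'}$ relies on the classification of normal semisimple subgroups over a connected base---and the explanation of the asymmetry between the two topologies: for the étale topology only the triviality of the kernel, and only in characteristic $0$, is asserted, reflecting the lack of a full étale homotopy-invariance theorem and the intervention of wild ramification and inseparability in positive characteristic.
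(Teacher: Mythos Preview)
Your approach is correct and matches the paper's: apply Theorem~\ref{thm:H1}(ii) with $\tau$ the Nisnevich and then the \'etale topology, reducing everything to the polynomial-ring inputs, and iterate in $n$ using that $R[x_1,\ldots,x_{n-1}]$ is again a regular domain containing $k$ with $G$ of isotropic rank $\ge 1$ after base change. The only refinement is that the paper supplies those polynomial inputs as its own Theorems~\ref{thm:Nis-bij} and~\ref{thm:et-ker}, extending the simply connected case of~\cite{PaStV} to arbitrary reductive $G$ via the d\'evissage of Lemma~\ref{lem:TG'GT} and the Serre--Grothendieck input~\cite{PaF}; so your citations to~\cite{PaStV,AHW} point to the right ingredients but slightly understate the work done in \S\ref{sec:H1}.
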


Theorem~\ref{thm:H1} and Corollary~\ref{cor:torsors} are proved in~\S~\ref{sec:Milnor} and~\S~\ref{sec:H1} respectively.

In parallel with the analog of the Bass--Quillen conjecture for discrete Hodge algebras,
T. Vorst~\cite[Theorem 1.1 (ii)]{Vo-SeHo} also established a similar result for the non-stable $SK_1$-functors $K_1^{\SL_n}(R)=\SL_n(R)/E_n(R)$, $n\ge 3$,
where $E_n(R)$ is the subgroup of $\SL_n(R)$ generated by the
elementary transvections $e+te_{ij}$, $1\le i\neq j\le n$, $t\in R$. Namely, one concludes that
if $K_1^{\SL_n}(R[x_1,\ldots,x_n])=K_1^{\SL_n}(R)$, then the same holds for any discrete Hodge algebra over $R$.
The corresponding analog of Serre's problem is supplied by~\cite{Sus}.
V.I. Kopeiko extended this observation to symplectic groups~\cite{Ko}.

More generally, for any reductive group $G$ over $R$ and a parabolic subgroup $P$ of $G$,
one defines the elementary subgroup $E_P(R)$ of $G(R)$ as the subgroup
generated by the $R$-points of unipotent radicals of parabolic subgroups of $G$, and considers the
corresponding non-stable $K_1$-functor $K_1^{G,P}(R)=G(R)/E_P(R)$~\cite{PS,St-poly}.
 In particular, if $A=k$ is a field, $E(k)$ is nothing but the group $G(k)^+$ introduced
by J. Tits~\cite{Tits64}, and $K_1^G(k)$ is the subject of the Kneser--Tits problem~\cite{Gil}. If $G$ has isotropic
rank $\ge 2$, then $K_1^{G,P}(R)$ is independent of $P$ and we denote it by $K_1^G(R)$, see \S~\ref{sec:K1} for
a formal definition.

We generalize the results of Vorst and Kopeiko as follows.
The proofs of Theorem~\ref{thm:square-free} and Corollaries~\ref{cor:square-free}--\ref{cor:split} are given in~\S~\ref{sec:K1}.

\begin{thm}\label{thm:square-free}
Let $G$ be a reductive
group scheme over a commutative Noetherian ring $R$, and let $P$ be a parabolic $R$-subgroup of $G$.
If $K_1^{G,P}(R[x_1,\ldots,x_n])=K_1^{G,P}(R)$ for any $n\ge 1$, then $K_1^{G,P}(A)=K_1^{G,P}(R)$
for any square-free discrete Hodge algebra $A$ over $R$.
\end{thm}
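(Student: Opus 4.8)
The plan is to imitate the proof of Theorem~\ref{thm:H1}: realise every square-free discrete Hodge algebra as an iterated fibre product along Milnor squares whose corners are square-free discrete Hodge algebras of strictly smaller combinatorial complexity, and to run the corresponding Mayer--Vietoris argument for the functor $K_1^{G,P}$ in place of $H^1_\tau$. Write $A=R[\Delta]$ for the Stanley--Reisner ring attached to the square-free monomial ideal $I$, where $\Delta$ is the associated simplicial complex, and induct on the number of faces of $\Delta$. Since the augmentation $A\to R$ (evaluation at $x_1=\dots=x_n=0$) and the structure map $R\to A$ compose to the identity, $K_1^{G,P}(R)$ is a retract of $K_1^{G,P}(A)$; hence the augmentation $K_1^{G,P}(A)\to K_1^{G,P}(R)$ is automatically surjective, and the whole content of the statement is its injectivity.

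For the inductive step I would choose a vertex $v$ of $\Delta$ that is \emph{not} a cone point; such a $v$ exists exactly when $\Delta$ is not a full simplex. The standard Mayer--Vietoris decomposition of Stanley--Reisner rings then exhibits $A$ as the fibre product
\[
A \;=\; R[\mathrm{del}_\Delta(v)]\;\times_{R[\mathrm{lk}_\Delta(v)]}\;R[\mathrm{st}_\Delta(v)],
\]
in which $R[\mathrm{del}_\Delta(v)]\to R[\mathrm{lk}_\Delta(v)]$ is a quotient map and $R[\mathrm{st}_\Delta(v)]\cong R[\mathrm{lk}_\Delta(v)][x_v]\to R[\mathrm{lk}_\Delta(v)]$ is evaluation at $x_v=0$, which is surjective; this is therefore a Milnor square. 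Because $v$ is a vertex but not a cone point, one checks that each of $\mathrm{del}_\Delta(v)$, $\mathrm{lk}_\Delta(v)$ and $\mathrm{st}_\Delta(v)$ has strictly fewer faces than $\Delta$, so the induction hypothesis applies to all three corners. The base case is that of a simplex, where $A=R[x_1,\dots,x_m]$ is a genuine polynomial ring and the hypothesis of the theorem gives $K_1^{G,P}(A)=K_1^{G,P}(R)$ outright. Note that the polynomial factor $R[\mathrm{lk}_\Delta(v)][x_v]$ creates no circularity: it is itself a square-free discrete Hodge algebra with fewer faces, handled by the same induction, so no separate homotopy-invariance input over intermediate Hodge algebras is needed.

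It remains to propagate the three corner equalities across the Milnor square, and this is where the work lies. Abbreviate the corners as $A_1=R[\mathrm{del}_\Delta(v)]$, $A_2=R[\mathrm{st}_\Delta(v)]$, $A_0=R[\mathrm{lk}_\Delta(v)]$, with projections $p_i\colon A\to A_i$ and $q_1\colon A_1\to A_0$, $j\colon A_2\to A_0$. As $G$ is affine, $G(A)=G(A_1)\times_{G(A_0)}G(A_2)$. Given $g\in G(A)$ representing a class that dies in $K_1^{G,P}(R)$, its augmentation lies in $E_P(R)$; multiplying $g$ by the image under $R\to A$ of the inverse of this augmentation (which lies in $E_P(A)$) reduces us to the case where the augmentation of $g$ is trivial. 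Then each $p_i(g)$ has trivial augmentation, and hence, by the inductive equalities $K_1^{G,P}(A_i)=K_1^{G,P}(R)$, lies in $E_P(A_i)$. Since the unipotent radicals generating $E_P$ are affine spaces (iterated extensions of $\Ga$) and $q_1$ is surjective, every elementary generator of $E_P(A_2)$ lifts to an elementary generator of $E_P(A)$; multiplying $g$ by a lift of $p_2(g)^{-1}$ brings it to the form $(h_1,1)$ with $h_1\in E_P(A_1)$ and, by compatibility over $A_0$, with $q_1(h_1)=1$.

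The main obstacle is the concluding step: showing that such a $(h_1,1)$, with $h_1\in E_P(A_1)\cap\ker q_1$, already lies in $E_P(A)$. Equivalently, one needs a patching (excision) property for the elementary subgroup along the Milnor square, namely that the sequence of pointed sets
\[
K_1^{G,P}(A)\longrightarrow K_1^{G,P}(A_1)\times K_1^{G,P}(A_2)\longrightarrow K_1^{G,P}(A_0)
\]
is exact with injective left-hand map. This is the genuinely group-theoretic input, parallel to the fppf-descent exactness used for $H^1_\tau$ in Theorem~\ref{thm:H1}; unlike the affineness of $G$ it is not formal, since the conductor ideal $\ker q_1\subseteq A_1$ is identified with $\ker(A\to A_2)$ only as a non-unital ring, and excision for $K_1$ fails in general. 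I would deduce it from the structure theory of the relative elementary subgroups $E_P(A,\mathfrak{a})$ for isotropic $G$ — in particular from the surjectivity of $E_P(A,\ker(A\to A_2))\to E_P(A_1,\ker q_1)$ forced by this identification of conductor ideals. Once that patching statement is available, the diagram chase above closes and the induction goes through.
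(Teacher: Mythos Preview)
Your overall strategy is exactly the paper's: the star/link/deletion Milnor square is Vorst's Lemma~\ref{lem:ind}, the induction is packaged abstractly as Lemma~\ref{lem:dh-functor}~(ii), and the patching input is Lemma~\ref{lem:milnor-sq-K1}, which shows that the induced $K_1^{G,P}$-square is Cartesian. The only cosmetic difference is that the paper inducts on Vorst's invariant $m_0(A)$ rather than on the number of faces of $\Delta$.

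There is, however, a genuine gap in your execution of the patching step: you reduce on the wrong side of the square. You arrange $(h_1,1)$ with $h_1\in E_P(A_1)\cap\ker G(q_1)$, where $A_1=R[\mathrm{del}_\Delta(v)]$ and $q_1\colon A_1\to A_0=R[\mathrm{lk}_\Delta(v)]$. Your proposed fix, the surjectivity of $E_P(A,\ker p_2)\to E_P(A_1,\ker q_1)$ coming from the conductor identification, only handles the \emph{lifting} of relative elementary elements; it does not show that $h_1$ lies in $E_P(A_1,\ker q_1)$ to begin with. That equality $E_P(A_1)\cap G(A_1,\ker q_1)=E_P(A_1,\ker q_1)$ is precisely the content of \cite[Lemma~4.1]{St-poly}, and it requires $q_1$ to be \emph{split}. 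In general it is not: for $\Delta$ on $\{1,2,3,4\}$ with maximal faces $\{1,2\},\{1,3\},\{2,3\},\{4\}$ and $v=1$, the deletion-to-link map is $R[x_2,x_3,x_4]/(x_2x_4,x_3x_4)\to R[x_2,x_3]/(x_2x_3)$, which admits no ring section since $x_2x_3\neq 0$ in the source. The remedy is simply to swap sides. The map $j\colon A_2=R[\mathrm{lk}_\Delta(v)][x_v]\to A_0$ \emph{is} split (section $x_v\mapsto 0$), and $p_1\colon A\to A_1$ is surjective since $j$ is; so lift $p_1(g)^{-1}\in E_P(A_1)$ to $E_P(A)$ instead, reducing to $(1,h_2)$ with $h_2\in E_P(A_2)\cap\ker G(j)$. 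Now \cite[Lemma~4.1]{St-poly} gives $h_2\in E_P(\ker j)^{E_P(s(A_0))}$, and your conductor-ideal lift (using that $q_1$, hence $p_2$, is surjective) finishes the argument. This is exactly how the paper proves Lemma~\ref{lem:milnor-sq-K1}.
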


\begin{cor}\label{cor:square-free}
Let $G$ be a reductive
group scheme of isotropic rank $\ge 2$ over a field $k$.
Assume that either $R=k$, or $k$ is perfect and $R$ is a regular ring containing $k$.
Then for any square-free discrete Hodge algebra $A$ over $R$ one has $K_1^G(A)=K_1^G(R)$.
\end{cor}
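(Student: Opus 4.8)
The plan is to obtain the corollary as a direct application of Theorem~\ref{thm:square-free}: I would verify its hypothesis, namely that $K_1^{G,P}(R[x_1,\ldots,x_n]) = K_1^{G,P}(R)$ for every $n \ge 1$, after which the conclusion $K_1^{G,P}(A) = K_1^{G,P}(R)$ for square-free $A$ is immediate. Since $G$ has isotropic rank $\ge 2$, the functor $K_1^{G,P}$ does not depend on the choice of the parabolic subgroup $P$ (see \S\ref{sec:K1}; cf.\ \cite{PS}), so I write $K_1^G$ and check the hypothesis for any fixed $P$. Everything then reduces to the polynomial homotopy invariance of $K_1^G$, which is the substantive input.

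This homotopy invariance I would import from the theory of non-stable $K_1$-functors of isotropic reductive groups (\cite{St-poly}), and the two hypotheses of the corollary correspond exactly to the two regimes in which it is available. In the field case $R = k$ one has $K_1^G(k[x_1,\ldots,x_n]) = K_1^G(k)$ directly for every $n$, with no perfectness restriction on $k$, so the hypothesis of Theorem~\ref{thm:square-free} holds at once. In the regular case, where $k$ is perfect and $R$ is a regular ring containing $k$, I would run an induction on $n$: setting $S = R[x_1,\ldots,x_{n-1}]$, this ring is again regular and contains the perfect field $k$, and the base change of $G$ to $S$ retains isotropic rank $\ge 2$, so the single-variable statement $K_1^G(S[x_n]) = K_1^G(S)$ applies and combines with the inductive hypothesis $K_1^G(S) = K_1^G(R)$ to give $K_1^G(R[x_1,\ldots,x_n]) = K_1^G(R)$. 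The point that makes the induction go through is that the class of regular rings containing the fixed perfect field $k$ is closed under adjoining a polynomial variable; this is also why perfectness is assumed here but is not needed when $R = k$ and the multivariable statement is invoked in one stroke.

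The main obstacle is located entirely in the cited homotopy invariance theorem: that is the deep input, resting on the structure theory of elementary subgroups of isotropic $G$ and on localization/patching arguments over regular rings, and it is precisely there that the distinction between an arbitrary field and a regular ring over a perfect field arises. By contrast, the passage to square-free discrete Hodge algebras is purely formal through Theorem~\ref{thm:square-free}, and the reduction of the multivariable hypothesis to the single-variable case is a routine induction once the homotopy invariance is in hand.
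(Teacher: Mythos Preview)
Your proposal is correct and follows essentially the same route as the paper: verify the polynomial homotopy invariance hypothesis of Theorem~\ref{thm:square-free} by invoking the results of~\cite{St-poly} (Theorems~1.2 and~1.3 there, covering the field case and the regular-over-perfect-field case respectively), and then apply Theorem~\ref{thm:square-free} directly. Your induction on $n$ to pass from one variable to several is a minor elaboration of what the paper leaves implicit in the citation, but otherwise the arguments coincide.
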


For non-square-free discrete Hodge algebras the above result cannot be true, since, for example,
$\Gm_{,k}(k[x]/x^2)\neq\Gm_{,k}(k)$. However, if $G$ is a Chevalley--Demazure (i.e. split) reductive group scheme, we are able
to show that central subtori are, essentially, the only problem.

\begin{cor}\label{cor:split-sc}
Let $G$ be a split simply connected semisimple group scheme
over $\ZZ$ and let $B$ be a Borel subgroup of $G$.
For any commutative Noetherian ring $R$,
if $K_1^{G,B}(R[x_1,\ldots,x_n])=K_1^{G,B}(R)$ for any $n\ge 1$, then $K_1^{G,B}(A)=K_1^{G,B}(R)$
for any discrete Hodge algebra $A$ over $R$.
\end{cor}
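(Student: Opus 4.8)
The plan is to reduce any discrete Hodge algebra $A = R[x_1,\dots,x_n]/I$ to its associated \emph{square-free} companion, to which Theorem~\ref{thm:square-free} applies, by killing a nilpotent ideal. First I would write $I = (m_1,\dots,m_r)$ with each $m_j$ a monomial and set $I^{sf} = (\sqrt{m_1},\dots,\sqrt{m_r})$, where $\sqrt{\prod_i x_i^{a_i}} = \prod_{a_i>0} x_i$ is the radical monomial. Then $A^{sf} := R[x_1,\dots,x_n]/I^{sf}$ is a square-free discrete Hodge algebra, $I \subseteq I^{sf}$, and the kernel $\mathfrak a$ of the canonical surjection $A \twoheadrightarrow A^{sf}$ is generated by the images of $\sqrt{m_1},\dots,\sqrt{m_r}$. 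Each generator is nilpotent in $A$, since a high enough power is divisible by the corresponding $m_j\in I$, so the finitely generated ideal $\mathfrak a$ is nilpotent. It then suffices to show that $K_1^{G,B}(A) \to K_1^{G,B}(A^{sf})$ is an isomorphism, after which Theorem~\ref{thm:square-free} together with the hypothesis $K_1^{G,B}(R[x_1,\dots,x_n]) = K_1^{G,B}(R)$ gives $K_1^{G,B}(A^{sf}) = K_1^{G,B}(R)$.

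The crucial input is the following claim: for $G$ split simply connected semisimple with Borel $B$ and any nilpotent ideal $\mathfrak a \subseteq A$, the congruence subgroup $G(A,\mathfrak a) := \Ker\bigl(G(A)\to G(A/\mathfrak a)\bigr)$ lies in $E_B(A)$. Granting it, the isomorphism is formal: $G$ is smooth over $\ZZ$, so $G(A)\to G(A^{sf})$ is surjective along the nilpotent ideal $\mathfrak a$, and $E_B(A)\to E_B(A^{sf})$ is surjective because the root-subgroup generators $x_\alpha(u)$ lift along $A\twoheadrightarrow A^{sf}$. Surjectivity of $K_1^{G,B}(A)\to K_1^{G,B}(A^{sf})$ is then immediate, and for injectivity, if $g\in G(A)$ maps into $E_B(A^{sf})$, I would choose $e\in E_B(A)$ with the same image, so that $ge^{-1}\in G(A,\mathfrak a)\subseteq E_B(A)$ by the claim, whence $g\in E_B(A)$.

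To prove the claim I would proceed by dévissage on the nilpotency degree, reducing to the square-zero case. If $\mathfrak c \subseteq A$ satisfies $\mathfrak c^2 = 0$, then smoothness of $G$ gives the standard linearization $G(A,\mathfrak c)\cong \Lie(G)\otimes\mathfrak c$, an isomorphism of abelian groups under which $1+\xi$ corresponds to $\xi$. Over $\ZZ$ the Lie algebra $\Lie(G)$ carries a Chevalley basis consisting of the root vectors $e_\alpha$ ($\alpha\in\Phi$) together with the coroots $h_i = \alpha_i^\vee$ of the simple roots; here simple-connectedness is exactly what guarantees that the $\alpha_i^\vee$ form a $\ZZ$-basis of the cocharacter lattice $\Lie(T)_\ZZ$. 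Now $E_B(A)\cap G(A,\mathfrak c)$ contains every $x_\alpha(c)$ with $c\in\mathfrak c$, corresponding to $e_\alpha\otimes c$, and every $h_\alpha(1+c) = w_\alpha(1+c)w_\alpha(1)^{-1}$ with $c\in\mathfrak c$ (a product of root elements, hence elementary, since $1+c$ is a unit), corresponding to $\alpha^\vee\otimes c$. As the $e_\alpha$ and $\alpha_i^\vee$ span $\Lie(G)$ over $\ZZ$, these elements generate all of $\Lie(G)\otimes\mathfrak c$, giving $G(A,\mathfrak c)\subseteq E_B(A)$. The general case follows by induction on $m$ with $\mathfrak a^m=0$: setting $\mathfrak c=\mathfrak a^{\lceil m/2\rceil}$, so $\mathfrak c^2=0$, the image of $g\in G(A,\mathfrak a)$ in $G(A/\mathfrak c)$ lies in $E_B(A/\mathfrak c)$ by the inductive hypothesis (the ideal $\mathfrak a/\mathfrak c$ has smaller nilpotency degree), and lifting it to $E_B(A)$ reduces $g$ to an element of $G(A,\mathfrak c)$, handled by the square-zero case.

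The main obstacle is the Cartan part of the square-zero computation: realizing the torus directions $\Lie(T)\otimes\mathfrak c$ inside $E_B(A)$. This is precisely where simple-connectedness is indispensable — only then is $\Lie(T)_\ZZ$ spanned by the coroots $\alpha^\vee$, so that every torus direction is hit by the manifestly elementary elements $h_\alpha(1+c)$. For groups with nontrivial fundamental group, or for reductive groups with a central subtorus, some torus directions are not coroots, the $h_\alpha(1+c)$ no longer generate $\Lie(T)\otimes\mathfrak c$, and the conclusion fails — consistent with the obstruction already visible for $\Gm$ noted before the statement.
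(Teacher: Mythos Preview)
Your proof is correct. Both you and the paper reduce to the square-free case via the nilpotent ideal $J=\ker(A\to A^{sf})$ and Theorem~\ref{thm:square-free}, so the only issue is showing $G(A,J)\subseteq E_B(A)$. Here the routes diverge. You linearize along square-zero ideals to get $G(A,\mathfrak c)\cong\Lie(G)\otimes\mathfrak c$, exhibit the root and coroot directions as elementary via $x_\alpha(c)$ and $h_\alpha(1+c)$, and climb back up by d\'evissage. The paper instead uses the big Bruhat cell: since $U_B T U_{B^-}$ is a principal open neighbourhood of $1$ in $G$, any $g$ with $g\equiv 1\pmod J$ has the defining function $f(g)$ a unit (units lift along nilpotent quotients), so $g\in U_B(A)\,T(A)\,U_{B^-}(A)$; then one invokes the stronger fact that for simply connected split $G$ the \emph{entire} torus $T(A)$ already sits in $E_B(A)$, since the simple coroots give an isomorphism $T\cong\Gm^r$ and each $h_{\alpha_i}(u)$ is elementary. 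Both arguments hinge on the same structural point --- simple-connectedness makes the coroots span the torus --- but the paper's big-cell argument is shorter and avoids the inductive d\'evissage, while your Lie-algebra computation makes the infinitesimal mechanism (and the precise failure for non-simply-connected groups) more transparent.
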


\begin{cor}\label{cor:split}
Let $G$ be a split reductive group scheme over $\ZZ$,
such that every semisimple normal subgroup of $G$ has semisimple rank $\ge 2$.
Let $R$ be a regular ring containing a field $k$. If $G$ is simply connected semisimple or $k$ has characteristic $0$,
then for any discrete Hodge algebra $A$ over $R$ the natural sequence of group homomorphisms
$$
1\to\ker\bigl(\rad(G)(A)\to \rad(G)(R)\bigr)\to K_1^G(A)\to K_1^G(R)\to 1
$$
is exact.
\end{cor}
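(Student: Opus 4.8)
The plan is to split along the two hypotheses and to reduce the reductive case to the already settled simply connected case. If $G$ is simply connected semisimple, then $\rad(G)=1$, the kernel term vanishes, and the assertion is merely that $K_1^G(A)\to K_1^G(R)$ is an isomorphism; this is Corollary~\ref{cor:split-sc} applied to $G$ with a Borel subgroup $B$ (here $K_1^{G,B}=K_1^G$ since every factor has rank $\ge 2$), once its hypothesis $K_1^{G,B}(R[x_1,\dots,x_n])=K_1^{G,B}(R)$ is supplied by homotopy invariance of the non-stable $K_1$-functor over the regular ring $R$ containing $k$. So from now on I assume $\operatorname{char}k=0$ and that $G$ is an arbitrary split reductive group with every semisimple normal subgroup of semisimple rank $\ge 2$. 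In all cases the inclusion $R\hookrightarrow A$ and the augmentation $A\to R$ compose to the identity, so $K_1^G(A)\to K_1^G(R)$ is a split surjection and $\Ker\bigl(\rad(G)(A)\to\rad(G)(R)\bigr)$, being the image of the central torus $\rad(G)$, is central in $K_1^G(A)$; thus the whole content is to identify $\Ker\bigl(K_1^G(A)\to K_1^G(R)\bigr)$ with $\Ker\bigl(\rad(G)(A)\to\rad(G)(R)\bigr)$.

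Next I would bring in the central isogeny. Write $C=\rad(G)$, a split torus, let $G_{\scl}$ be the simply connected cover of $[G,G]$, and consider the multiplication map $\phi\colon G_{\scl}\times C\to G$. Over $\ZZ$ this is a central isogeny whose kernel $\mu$ is finite diagonalizable, and over $R\supseteq k$ in characteristic $0$ it is étale. Since $G$ is split with all semisimple factors of rank $\ge 2$, the subgroup $E_G(B)$ is generated by root subgroups, is normal, is independent of the parabolic, and equals $\phi\bigl(E_{G_{\scl}}(B)\times 1\bigr)$, while $C$ is a torus with no elementary part, so $K_1^{G_{\scl}\times C}(B)=K_1^{G_{\scl}}(B)\times C(B)$. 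Feeding the étale-exact sequence $1\to\mu\to G_{\scl}\times C\to G\to 1$ into the non-stable $K_1$-functors and using $E_G=\phi(E_{G_{\scl}})$, for every $R$-algebra $B$ I obtain an exact sequence
$$
\mu(B)\to K_1^{G_{\scl}}(B)\times C(B)\xrightarrow{\ \bar\phi_B\ }K_1^G(B)\xrightarrow{\ \partial_B\ }H^1_{\et}(B,\mu),
$$
where $\Ker\bar\phi_B=\im\bigl(\mu(B)\bigr)$ and $\Ker\partial_B=\im\bar\phi_B$; the connecting homomorphism $\partial_B$ descends to $K_1^G(B)$ because $E_G(B)\subseteq\im\phi_B$ dies in $H^1_{\et}(B,\mu)$, and it is a genuine homomorphism as $\mu$ is central.

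Now I would compare this sequence for $B=A$ and $B=R$ through the morphism induced by $A\to R$, all of whose vertical maps are split surjective. Corollary~\ref{cor:split-sc}, applicable to all discrete Hodge algebras because $G_{\scl}$ is simply connected, gives $K_1^{G_{\scl}}(A)\xrightarrow{\sim}K_1^{G_{\scl}}(R)$. A diagram chase then reduces the identification of the two kernels to two facts about $\mu$: that $\mu(A)=\mu(R)$, and that $\Ker\bigl(H^1_{\et}(A,\mu)\to H^1_{\et}(R,\mu)\bigr)=0$. Granting these, any $x\in\Ker\bigl(K_1^G(A)\to K_1^G(R)\bigr)$ has $\partial_A(x)$ in that kernel, hence $\partial_A(x)=0$ and $x=\bar\phi_A(y)$; pushing $y$ to $R$ and using $K_1^{G_{\scl}}(A)\cong K_1^{G_{\scl}}(R)$ together with $\mu(A)=\mu(R)$ corrects $y$ into $\Ker\bigl(C(A)\to C(R)\bigr)$, and the identity $\mu\cap(1\times C)=1$ together with $\mu(A)=\mu(R)$ gives injectivity of $\Ker\bigl(C(A)\to C(R)\bigr)\to K_1^G(A)$. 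This yields exactly the asserted exact sequence.

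The hard part is the vanishing of $\Ker\bigl(H^1_{\et}(A,\mu)\to H^1_{\et}(R,\mu)\bigr)$, and this is where both the characteristic-zero and the regularity hypotheses are genuinely used. Writing $\mu$ as a product of groups $\mu_n$ and applying the Kummer sequence, the claim splits into $\Ker\bigl(A^*/(A^*)^n\to R^*/(R^*)^n\bigr)=0$ and $\Ker\bigl(\operatorname{Pic}(A)\to\operatorname{Pic}(R)\bigr)=0$. For the unit part, since the square-free (hence reduced) quotient of $A$ acquires no units beyond those of $R$, one has $\Ker(A^*\to R^*)=1+\operatorname{nil}(A)$, a uniquely $n$-divisible group in characteristic $0$, whose image in $A^*/(A^*)^n$ is therefore trivial; the same divisibility forces $\mu_n(A)=\mu_n(R)$. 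For the Picard part, Vorst's theorem~\cite{Vo-SeHo} that finitely generated projective modules over a discrete Hodge algebra over the regular ring $R$ are extended from $R$ gives $\operatorname{Pic}(A)\cong\operatorname{Pic}(R)$. I expect the bookkeeping around the connecting map and the central-but-possibly-nonabelian structure of $K_1^G$ to be the only remaining technical points, with the cohomological vanishing above as the real obstacle.
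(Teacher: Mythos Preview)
Your argument is correct, but it follows a genuinely different route from the paper's.

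The paper never computes $H^1_{\et}(A,\mu)$ or appeals to Vorst's Picard result. Instead it exploits the fact that the reduction $A\to A/J$ to the associated square-free discrete Hodge algebra has nilpotent kernel, so $(A,J)$ is a Henselian pair. It then applies Lemma~\ref{lem:ker-K-seq} (the three-term kernel sequence for a central extension along a Henselian pair) twice: first to $1\to C\to G^{sc}\to G^{ss}\to 1$, using $K_1^{G^{sc}}(A)=K_1^{G^{sc}}(A/J)$ from Corollary~\ref{cor:split-sc} to deduce $K_1^{G^{ss}}(A)=K_1^{G^{ss}}(A/J)$; then to $1\to\rad(G)\to G\to G^{ss}\to 1$, which yields $\ker\bigl(K_1^G(A)\to K_1^G(A/J)\bigr)=\ker\bigl(\rad(G)(A)\to\rad(G)(A/J)\bigr)$. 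Theorem~\ref{thm:square-free} identifies all the $A/J$-values with the corresponding $R$-values, and the proof is complete. Characteristic~$0$ enters only because Lemma~\ref{lem:ker-K-seq} needs the multiplicative kernel to be smooth.

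Your approach trades the Henselian-pair machinery for a direct cohomological computation: one isogeny $G_{\scl}\times\rad(G)\to G$, a four-term sequence ending in $H^1_{\et}(-,\mu)$, and then Kummer plus the unique divisibility of $1+\operatorname{nil}(A)$ and Vorst's $\operatorname{Pic}(A)\cong\operatorname{Pic}(R)$ to kill the relevant kernel. This makes the role of characteristic~$0$ very explicit and bypasses the two-step dévissage through $G^{ss}$, at the cost of importing the Picard computation as an external input. The paper's route stays entirely within the $K_1$-framework developed in the paper and needs only the standard invariance of $H^1_{\et}(-,T)$ along Henselian pairs; yours is more hands-on but equally valid.
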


Assume that $G$ is defined over an infinite perfect field $k$, and let $R$ be a smooth $k$-algebra.
By~\cite[Theorem 4.1.3]{AHW} (see also~\cite{Mo-book} for the $\GL_n$ case) we know that
$H^1_{Nis}(R,G)=\Hom_{\mathcal{H}(k)}(\Spec(R),BG)$,
where $\mathcal{H}(k)$ is
 the Morel--Voevodsky $\Aff^1$-homotopy category  over $k$~\cite{MoV}. Combining~\cite[Theorem 4.3.1]{AHW}
with~\cite[Theorem 1.3]{St-poly}, one concludes that $K_1^G(R)=\Hom_{\mathcal{H}(k)}(\Spec(R),G)$. The
results of the present paper suggest that this relationship may somehow extend to non-smooth $k$-algebras.
See also Remark~\ref{rem:long} in~\S~\ref{sec:K1}.

\section{Discrete Hodge algebras as pull-backs}

Following Vorst~\cite{Vo-GLHo}, for any square-free discrete Hodge algebra $A$ over $R$ denote by $m_0(A)$ the minimal integer
$m$ such that $A\cong (R[x_1,\ldots,x_m]/I)[x_{m+1},\ldots,x_n]$, where $I$ is generated by square free monomials.
Note that there is a natural bijective correspondence between simplicial subcomplexes $\Sigma$
of a standard n-simplex $\Delta_n$
and square-free discrete Hodge algebras which are quotients of $R[x_0,\ldots,x_n]$ by the ideal generated by all monomials
that do not occur as faces of $\Sigma$~\cite[3.3]{Vo-SeHo}. This yields an easy geometric proof of the following
statement.

\begin{lem}\cite[3.4]{Vo-SeHo}\label{lem:ind}
Let $A$ be a square-free discrete Hodge algebra over $R$ with $m_0(A)>0$, then there exist square-free discrete Hodge algebras $A_1$ and
$A_2$ over $R$ and a Cartesian square of rings
\begin{equation}\label{eq:ind-square}
\xymatrix@R=20pt@C=35pt{
A\ar[d]^{i_2}\ar[r]^{i_1}&A_1\ar[d]^{j_1}\\
A_2[x]\ar[r]^{j_2}&A_2\\
}
\end{equation}
such that all maps are surjective, $j_2$ is the evaluation at $x=0$, and $m_0(A_2)<m_0(A)$, $m_0(A_1)<m_0(A)$.
\end{lem}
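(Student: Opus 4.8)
The plan is to translate the statement into the simplicial language recalled just before the lemma and argue geometrically. Write $R[\Sigma]=R[x_1,\dots,x_n]/I_\Sigma$ for the square-free discrete Hodge algebra attached to a simplicial complex $\Sigma$ on $\{1,\dots,n\}$, where $I_\Sigma$ is generated by the monomials $\prod_{i\in F}x_i$ over the non-faces $F$ of $\Sigma$. I would first reduce to the case where $\Sigma$ has no cone point and $n=m_0(A)$: setting $m=m_0(A)$, by definition of $m_0$ we may present $A=R[\Sigma_0][y_1,\dots,y_k]$ with $\Sigma_0$ a complex on $m$ vertices, and minimality of $m$ forces $\Sigma_0$ to have no cone point, since a cone point $v$ would give $\Sigma_0=v*(\Sigma_0\setminus v)$, hence $R[\Sigma_0]=R[\Sigma_0\setminus v][x_v]$ with $x_v$ a further free variable, contradicting minimality. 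Because adjoining the free variables $y_1,\dots,y_k$ is exact over $R$ (flat base change along $R\to R[\vec y]$), it carries a Cartesian square of the required shape for $R[\Sigma_0]$ to one for $A$ and leaves every value of $m_0$ unchanged. So it suffices to treat $\Sigma_0$, which I rename $\Sigma$, now assuming $n=m_0(A)>0$ and $\Sigma$ cone-point-free.

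Next I would fix any vertex $v$ of $\Sigma$ and decompose $\Sigma$ as the union of its deletion $\Sigma\setminus v=\{F\in\Sigma : v\notin F\}$ and its closed star $\overline{\mathrm{st}}(v)=\{F\in\Sigma : F\cup\{v\}\in\Sigma\}$, whose intersection is the link $\mathrm{lk}(v)=\{F\in\Sigma : v\notin F,\ F\cup\{v\}\in\Sigma\}$. I set $A_1=R[\Sigma\setminus v]$ and $A_2=R[\mathrm{lk}(v)]$; both are square-free discrete Hodge algebras over $R$. The reason for using the closed star is that it is the cone $v*\mathrm{lk}(v)$, so its minimal non-faces never involve $x_v$ and $R[\overline{\mathrm{st}}(v)]=A_2[x_v]$, producing the required polynomial factor with $x=x_v$. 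All four restriction maps are surjections of quotients of $R[x_1,\dots,x_n]$, and $j_2\colon A_2[x]\to A_2$ kills $x_v$, i.e. is evaluation at $x=0$, as demanded.

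It then remains to check that the square is Cartesian and that $m_0$ strictly drops. For the first I would invoke the Milnor (Mayer--Vietoris) exact sequence $0\to S/(I\cap J)\to S/I\oplus S/J\to S/(I+J)\to 0$ for $S=R[x_1,\dots,x_n]$, with $I=I_{\Sigma\setminus v}$ and $J=I_{\overline{\mathrm{st}}(v)}$: here $I\cap J=I_\Sigma$ since $\Sigma=(\Sigma\setminus v)\cup\overline{\mathrm{st}}(v)$, and $I+J=I_{\mathrm{lk}(v)}$ since $\mathrm{lk}(v)=(\Sigma\setminus v)\cap\overline{\mathrm{st}}(v)$, which exhibits $A$ as the pullback $A_1\times_{A_2}A_2[x]$. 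For the second, after the reduction $m_0(A)=n$, whereas $A_1$ and $A_2$ are square-free discrete Hodge algebras in the $n-1$ variables $\{x_1,\dots,x_n\}\setminus\{x_v\}$, so that $m_0(A_1),m_0(A_2)\le n-1<m_0(A)$ automatically. The only real care lies in the combinatorics-to-ideals dictionary used above—verifying the two ideal identities $I_\Sigma=I_{\Sigma\setminus v}\cap I_{\overline{\mathrm{st}}(v)}$ and $I_{\mathrm{lk}(v)}=I_{\Sigma\setminus v}+I_{\overline{\mathrm{st}}(v)}$ together with the cone identity $R[\overline{\mathrm{st}}(v)]=R[\mathrm{lk}(v)][x_v]$—for once these are in place the Milnor sequence and the $m_0$ estimates are immediate, which is precisely what makes the geometric argument easy.
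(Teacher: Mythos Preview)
Your argument is correct and is exactly the ``easy geometric proof'' the paper alludes to: the paper does not spell out a proof but cites Vorst and points to the Stanley--Reisner correspondence, and the star/link decomposition $\Sigma=(\Sigma\setminus v)\cup\overline{\mathrm{st}}(v)$ with $\overline{\mathrm{st}}(v)=v*\mathrm{lk}(v)$ is precisely Vorst's construction. One tiny remark: in your reduction step you only need the inequality $m_0(B[y_1,\dots,y_k])\le m_0(B)$, not equality, and that inequality is immediate; this already suffices for the bounds $m_0(A_1),m_0(A_2)\le m-1<m_0(A)$.
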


The following lemma is a slightly modified version of~\cite[Lemma 5.7, attributed to C. Weibel and R. G. Swan]{And}.

\begin{lem}\label{lem:graded-bij}
Let $R$ be a commutative ring with 1, and let $F$ be a covariant functor on the category of commutative
finitely generated $R$-algebras
with values in pointed sets. Let $A=\bigoplus_{i\ge 0}A_i$ be a graded $R$-algebra such that the map $F(A[x])\to F(A)$
induced by the evaluation $A[x]\xrightarrow{x\mapsto 0}A$ has trivial kernel (respectively, is bijective).
Then the map $F(A)\to F(A/\bigoplus_{i\ge 1}A_i)$ induced by the quotient homomorphism
has trivial kernel (respectively, is bijective).
\end{lem}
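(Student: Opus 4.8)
The plan is to run the classical Swan--Weibel homotopy argument, using the grading on $A$ to build an explicit algebraic ``homotopy'' that connects the identity of $A$ with the composite $A\to A_0\hookrightarrow A$. Write $A_0=A/\bigoplus_{i\ge1}A_i$, let $\pi\colon A\to A_0$ be the quotient homomorphism appearing in the statement, and let $\iota\colon A_0\to A$ be the inclusion of the degree-zero component, so that $\pi\circ\iota=\id$. The central object I would introduce is the $R$-algebra homomorphism $h\colon A\to A[x]$ defined on homogeneous elements by $h(a)=ax^{i}$ for $a\in A_i$, extended $R$-linearly. Composing with the two evaluations $e_0,e_1\colon A[x]\to A$ at $x=0$ and $x=1$, one reads off the two identities that drive the whole argument: $e_1\circ h=\id_A$ (since $1^i=1$) and $e_0\circ h=\iota\circ\pi$ (since $x^i\mapsto 0$ for $i\ge1$ and $x^0\mapsto1$). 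Note that $e_0$ is precisely the evaluation map $A[x]\xrightarrow{x\mapsto0}A$ whose image under $F$ is assumed to have trivial kernel, respectively be bijective.

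Applying the functor $F$ turns these into relations among maps of pointed sets, all of which are basepoint-preserving. For the trivial-kernel case I would take $\xi\in F(A)$ with $F(\pi)(\xi)=*$ and compute $F(e_0)\bigl(F(h)(\xi)\bigr)=F(\iota)\bigl(F(\pi)(\xi)\bigr)=*$; the hypothesis on $F(e_0)$ then forces $F(h)(\xi)=*$, and applying $F(e_1)$ yields $\xi=F(e_1)\bigl(F(h)(\xi)\bigr)=*$ by $e_1\circ h=\id_A$. For the bijective case, surjectivity of $F(\pi)$ is free of charge from $\pi\circ\iota=\id$, which gives $F(\pi)\circ F(\iota)=\id_{F(A_0)}$; so only injectivity requires the hypothesis. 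Given $\xi,\eta\in F(A)$ with $F(\pi)(\xi)=F(\pi)(\eta)$, the identity $e_0\circ h=\iota\circ\pi$ shows $F(e_0)\bigl(F(h)(\xi)\bigr)=F(e_0)\bigl(F(h)(\eta)\bigr)$, whence $F(h)(\xi)=F(h)(\eta)$ by injectivity of $F(e_0)$, and applying $F(e_1)$ gives $\xi=\eta$.

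The step I expect to carry the real content is the verification that $h$ is a well-defined morphism in the category on which $F$ lives, i.e.\ an $R$-algebra homomorphism between finitely generated $R$-algebras. Well-definedness and $R$-linearity are immediate, but multiplicativity is exactly where the grading is used: for homogeneous $a\in A_i$, $b\in A_j$ one has $ab\in A_{i+j}$, so $h(ab)=ab\,x^{i+j}=(ax^{i})(bx^{j})=h(a)h(b)$, and this extends to all of $A$ because every element is a finite sum of homogeneous pieces. One should also record that $A$ (hence $A[x]$) is finitely generated over $R$ so that $F$ is defined on all the rings in play, and that the structure map $R\to A$ factors through $A_0$, which is what makes $\pi$ and $\iota$ into $R$-algebra homomorphisms. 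Once $h$ is in hand, the deduction above is purely formal and uses nothing about $F$ beyond functoriality and preservation of basepoints.
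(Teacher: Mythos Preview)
Your proof is correct and follows the same Swan--Weibel homotopy argument as the paper: your map $h$ is exactly the paper's map $f$, and the trivial-kernel case is argued identically. The only minor difference is in the bijective case, where the paper uses the translation automorphism $t_1\colon x\mapsto x+1$ to deduce that $F(e_1)=F(e_0\circ t_1)$ is bijective (hence $F(h)$ is bijective), while you bypass this by using injectivity of $F(e_0)$ directly together with the section $\iota$; both routes are straightforward and equivalent.
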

\begin{proof}
Define $f:A=\bigoplus_{i\ge 0}A_i\to A[x]$ by $f(\sum_{i=0}^na_i)=\sum_{i=0}^na_ix^i$. Let $t_a:A[x]\to A[x]$ denote the
automorphism induced by $x\mapsto x+a$, $a\in A$, and let $e_a:A[x]\to A$ denote the evaluation at $x=a$.
Since $e_1\circ f=\id_A$, we conclude that $F(f)$ is injective. By assumption $F(e_0)$ has trivial kernel,
hence $F(e_0\circ f)=F(e_0)\circ F(f)$
also has trivial kernel. But the latter map factors through the quotient
homomorphism $A\to A/\bigoplus_{i\ge 1}A_i$.

Similarly, if $F(e_0)$ is bijective, then $F(e_1)=F(e_0\circ t_1)$ is also bijective, and hence $F(f)$ is bijective. Then
$F(e_0\circ f):F(A)\to F(A/\bigoplus_{i\ge 1}A_i)\cong F(A_0)$ is bijective.
\end{proof}

\begin{cor}\label{cor:H-graded}
Let $R$ be a commutative ring with 1, and let $F$ be a covariant functor on the category of commutative
finitely generated $R$-algebras with values in pointed sets. For any discrete Hodge algebra $A=R[x_1,\ldots,x_n]/I$ over $R$,
if $F(A[x])\xrightarrow{x\mapsto 0}F(A)$ has trivial kernel (respectively, is bijective), then
the canonical projection $F(A)\xrightarrow{x_i\mapsto 0} F(R)$ has the same property.
\end{cor}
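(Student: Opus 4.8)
The plan is to recognize the discrete Hodge algebra $A=R[x_1,\ldots,x_n]/I$ as a graded $R$-algebra and then invoke Lemma~\ref{lem:graded-bij} directly. First I would equip the polynomial ring $R[x_1,\ldots,x_n]$ with its standard grading, assigning degree $1$ to each $x_i$ and degree $0$ to the elements of $R$. Since $I$ is generated by monomials, each such generator $x_1^{a_1}\cdots x_n^{a_n}$ is homogeneous of degree $a_1+\cdots+a_n$; hence $I$ is a homogeneous ideal, and the quotient $A$ inherits a grading $A=\bigoplus_{i\ge 0}A_i$.

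The key identification to make is that the degree-zero component is $A_0=R$ and that the associated quotient $A/\bigoplus_{i\ge 1}A_i$ is precisely $R$, with the quotient homomorphism coinciding with the evaluation at $x_1=\cdots=x_n=0$. Indeed, because every monomial generator of $I$ has positive degree, $I$ meets $R=R[x_1,\ldots,x_n]_0$ trivially, so $A_0=R$; and the ideal $\bigoplus_{i\ge 1}A_i$ is exactly the ideal of $A$ generated by the images of $x_1,\ldots,x_n$, whose quotient is the fibre over $x_1=\cdots=x_n=0$.

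With these observations in place, the hypothesis that $F(A[x])\xrightarrow{x\mapsto 0}F(A)$ has trivial kernel (respectively, is bijective) is exactly the input required by Lemma~\ref{lem:graded-bij} for the graded algebra $A$. Applying that lemma yields that $F(A)\to F(A/\bigoplus_{i\ge 1}A_i)=F(R)$ has trivial kernel (respectively, is bijective), which is the desired conclusion.

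There is no serious obstacle here: the entire content is the observation that a discrete Hodge algebra is graded by total degree, so that the general graded statement of Lemma~\ref{lem:graded-bij} applies verbatim. The only point requiring a moment's care is checking that the abstract quotient map $A\to A/\bigoplus_{i\ge 1}A_i$ furnished by the lemma agrees with the concrete evaluation map $x_i\mapsto 0$ named in the statement, which is immediate once one notes that both have kernel equal to the irrelevant ideal.
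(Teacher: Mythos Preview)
Your proposal is correct and follows exactly the same approach as the paper: the paper's proof consists of the single sentence ``Lemma~\ref{lem:graded-bij} applies, since the algebra $A$ inherits the total degree grading from $R[x_1,\ldots,x_n]$,'' and you have simply spelled out the details of this observation. The extra care you take in identifying $A_0=R$ and matching the abstract quotient map with evaluation at $x_i=0$ is implicit in the paper's one-line proof.
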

\begin{proof}
Lemma~\ref{lem:graded-bij} applies, since the algebra $A$ inherits the total degree grading from $R[x_1,\ldots,x_n]$.
\end{proof}

\begin{lem}\label{lem:dh-functor}
Let $R$ be a commutative ring with 1. Let $F$ be a covariant functor on the category of commutative finitely generated $R$-algebras
with values in pointed sets.

(i) Assume that for any
Cartesian square of commutative finitely generated $R$-algebras
\begin{equation}\label{eq:sq-F}
\xymatrix@R=20pt@C=35pt{
A\ar[d]^{i_2}\ar[r]^{i_1}&A_1\ar[d]^{j_1}\\
A_2[x]\ar[r]^{j_2}&A_2\\
}
\end{equation}
where $j_1$ is surjective and $j_2$ is the evaluation at $x=0$, the map of sets
\begin{equation}\label{eq:map-inj}
F(A)\xrightarrow{(i_2,i_1)} F(A_2[x])\times F(A_1)
\end{equation}
has trivial kernel. Assume also that the maps $g_m:F(R[x_1,\ldots,x_m])\to F(R)$ induced by evaluation at
$x_1=x_2=\ldots=x_m=0$ have trivial kernel for any $m\ge 1$. Then for any
square-free discrete Hodge algebra $A=R[x_1,\ldots,x_n]/I$ over $R$
the map $F(A)\to F(R)$ induced by evaluation at $x_1=x_2=\ldots=x_n=0$
has trivial kernel.

(ii) Assume instead that for any square~\eqref{eq:sq-F}, whenever $F(j_2)$ bijective, $F(i_1)$ is also bijective,
and that all maps $g_m$, $m\ge 1$, are bijective. Then
for any square-free discrete Hodge algebra $A$ over $R$ the map $F(A)\to F(R)$ is bijective.

\end{lem}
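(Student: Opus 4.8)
The plan is to argue by induction on the invariant $m_0(A)$ introduced at the start of this section. In the base case $m_0(A)=0$ the algebra $A$ is isomorphic to a polynomial ring $R[x_1,\dots,x_n]$, the evaluation map is exactly $g_n$, and the hypotheses on the $g_m$ give the conclusion directly. For the inductive step, assuming $m_0(A)>0$, I would invoke Lemma~\ref{lem:ind} to obtain square-free discrete Hodge algebras $A_1,A_2$ and the Cartesian square~\eqref{eq:ind-square} with $m_0(A_1),m_0(A_2)<m_0(A)$, and then reduce the property for $A$ to the corresponding property for $A_1$ and $A_2[x]$, both of which satisfy the inductive hypothesis. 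The key bookkeeping observation, which I would record first, is that $A_2[x]$ is again a square-free discrete Hodge algebra over $R$ with $m_0(A_2[x])=m_0(A_2)<m_0(A)$, since adjoining the free polynomial variable $x$ only enlarges the polynomial part; hence the inductive hypothesis applies to $A_2[x]$ as well.

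The glue that makes the induction run is the compatibility of the augmentations with the maps of the square. Every square-free discrete Hodge algebra is non-negatively graded with degree-zero part $R$, and evaluation at $x_1=\dots=x_n=0$ is precisely the augmentation onto the degree-zero part. All four maps of~\eqref{eq:ind-square} are surjective homomorphisms of graded $R$-algebras (the maps $i_1,i_2$ send variables to variables or to $0$, and $j_2$ is evaluation at the degree-one variable $x$), so they commute with the augmentations. This yields the two factorizations $\mathrm{ev}_A=\mathrm{ev}_{A_1}\circ i_1$ and $\mathrm{ev}_A=\mathrm{ev}_{A_2}\circ j_2\circ i_2=\mathrm{ev}_{A_2[x]}\circ i_2$, where $\mathrm{ev}_B\colon B\to R$ denotes the evaluation of a square-free discrete Hodge algebra $B$.

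For part (i), take $a\in F(A)$ with $F(\mathrm{ev}_A)(a)=\ast$. Applying $F$ to the first factorization and using that $F(\mathrm{ev}_{A_1})$ has trivial kernel (inductive hypothesis, $m_0(A_1)<m_0(A)$) shows $F(i_1)(a)=\ast$; applying $F$ to the second factorization and using that $F(\mathrm{ev}_{A_2[x]})$ has trivial kernel (inductive hypothesis, $m_0(A_2[x])<m_0(A)$) shows $F(i_2)(a)=\ast$. Thus $a$ lies in the kernel of $F(A)\xrightarrow{(i_2,i_1)}F(A_2[x])\times F(A_1)$, which is trivial by the hypothesis of (i) applied to the square~\eqref{eq:ind-square}, so $a=\ast$.

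For part (ii), the inductive hypothesis makes $F(\mathrm{ev}_{A_2[x]})$ and $F(\mathrm{ev}_{A_2})$ bijective, and the relation $\mathrm{ev}_{A_2[x]}=\mathrm{ev}_{A_2}\circ j_2$ then forces $F(j_2)=F(\mathrm{ev}_{A_2})^{-1}\circ F(\mathrm{ev}_{A_2[x]})$ to be bijective. The hypothesis of (ii) now gives that $F(i_1)$ is bijective, and the factorization $\mathrm{ev}_A=\mathrm{ev}_{A_1}\circ i_1$ together with the bijectivity of $F(\mathrm{ev}_{A_1})$ (inductive hypothesis) yields that $F(\mathrm{ev}_A)$ is bijective. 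I expect the only genuinely delicate points to be the two structural facts feeding the induction—that $m_0(A_2[x])<m_0(A)$ and that all maps of~\eqref{eq:ind-square} respect the augmentations—rather than the diagram chases, which are then formal.
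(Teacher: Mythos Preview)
Your argument is correct. The induction on $m_0(A)$, the use of Lemma~\ref{lem:ind}, and the observation that $m_0(A_2[x])=m_0(A_2)<m_0(A)$ are exactly what drives the paper's proof as well, and for part~(ii) your reasoning coincides with the paper's essentially verbatim.

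For part~(i), however, your route is genuinely more direct than the paper's. The paper does not use the factorizations $\mathrm{ev}_A=\mathrm{ev}_{A_1}\circ i_1=\mathrm{ev}_{A_2[x]}\circ i_2$ to finish immediately; instead it tensors the square~\eqref{eq:ind-square} with a fresh variable $y$, applies the Cartesian-square hypothesis to show $F(A[y])\to F(A_1[y])\times F(A_2[x,y])$ has trivial kernel, uses the induction hypothesis (via the same factorization through $R$ that you exploit) to see that $F(A_1[y])\to F(A_1)$ and $F(A_2[x,y])\to F(A_2[x])$ have trivial kernel, concludes that $F(A[y])\to F(A)$ has trivial kernel, and only then invokes Corollary~\ref{cor:H-graded} to pass to $F(A)\to F(R)$. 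Your approach collapses this detour: once one notes that the maps in~\eqref{eq:ind-square} are graded $R$-algebra homomorphisms and hence commute with the augmentations, the induction hypothesis applies directly to $A_1$ and $A_2[x]$, and neither the auxiliary variable $y$ nor Corollary~\ref{cor:H-graded} is needed. The paper's version has the mild advantage that it isolates the intermediate statement ``$F(A[y])\to F(A)$ has trivial kernel'' which is of independent interest (cf.\ Remark~\ref{rem:nilp}), but for the lemma as stated your argument is cleaner.
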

\begin{proof}

We apply induction on $m_0(A)$. If $m_0(A)=0$, then
$A=R[x_1,\ldots,x_n]$, and the claims hold. Assume that the claim holds for
any square-free discrete Hodge algebra $C$ over $R$ with $m_0(C)<m_0(A)$. By Lemma~\ref{lem:ind} there is a Cartesian
square~\eqref{eq:ind-square}. Assume (i). The induced square
\begin{equation}\label{eq:ind-square}
\xymatrix@R=20pt@C=35pt{
A[y]\ar[d]^{i_2}\ar[r]^{i_1}&A_1[y]\ar[d]^{j_1}\\
A_2[x,y]\ar[r]^{j_2}&A_2[y]\\
}
\end{equation}
is also Cartesian, and hence $F(A[y])\xrightarrow{(i_1,i_2)} F(A_1[y])\times F(A_2[x,y])$ has trivial kernel.
By the induction hypothesis the maps $F(A_1[y])\to F(A_1)$ and $F(A_2[x,y])\to F(A_2[x])$ induced by evaluation at $y=0$
have trivial kernel, hence $F(A[y])\to F(A)$ has trivial kernel.
By Corollary~\ref{cor:H-graded} the map $F(A)\to F(R)$ has trivial kernel.

In (ii), similarly, the map $F(A_2[x])\to F(A_2)$ is bijective by
induction assumption, hence $F(A)\to F(A_1)$ is bijective.
Again by induction assumption, $F(R)\to F(A_1)$ is bijective,
hence $F(R)\to F(A)$ is bijective.
\end{proof}

\begin{rem}\label{rem:nilp}
Let $A=R[x_1,\ldots,x_n]/I$, where $I$ is any ideal generated by monomials, be any discrete Hodge algebra over $R$.
Let $I_0\subseteq R[x_1,\ldots,x_n]$
be the ideal generated by monomials $x_1^{1-\delta_{i_1,0}}x_2^{1-\delta_{i_2,0}}\!\dots x_n^{1-\delta_{i_n,0}}$ for all
monomials $x_1^{i_1}x_2^{i_2}\dots x_n^{i_n}$ generating $I$; here $\delta_{i,0}$ denotes Kronecker delta.
Then $R[x_1,\ldots,x_n]/I_0$ is a square-free discrete Hodge algebra, and the kernel of
$$
\rho:A\to R[x_1,\ldots,x_n]/I_0
$$
is the nilpotent ideal $J=I_0/I$. Thus, if the functor $F$ of Lemma~\ref{lem:dh-functor} is such that $F(A)\to F(A/J)$
has trivial kernel, or, respectively, is bijective, then the claim (i), or, respectively, (ii) of the lemma holds for
any discrete Hodge algebra.
\end{rem}

\section{Milnor squares and $G$-torsors}\label{sec:Milnor}

Recall that
a Cartesian square of rings
\begin{equation}\label{eq:milnor-sq}
\xymatrix@R=20pt@C=35pt{
A\ar[d]^{i_1}\ar[r]^{i_2}&A_2\ar[d]^{j_2}\\
A_1\ar[r]^{j_1}&B\\
}
\end{equation}
is called a Milnor square, if at least one of the maps $j_1$, $j_2$ is surjective~\cite[\S 2]{Milnor}.
Note that if, say,
$j_1$ is surjective (respectively, split surjective), then $i_2$ has the same property. J. Milnor showed that these squares
have patching property for finitely generated projective modules. We need the following extension of this result.

\begin{lem}\label{lem:milnor-patch}
Let $G$ be a faithfully flat affine group scheme locally of finite presentation over a commutative ring $R$. Consider a
Milnor square of $R$-algebras~\eqref{eq:milnor-sq}, where $j_1$ is surjective.

(i) For any fppf $G$-torsors $E_1$ and $E_2$ over $A_1$ and $A_2$ respectively, and any
$G$-equivariant isomorphism $\phi:j_1^*(E_1)\to j_2^*(E_2)$ over $B$, there is a $G$-torsor
$E=E_1\cup_\phi E_2$ over $A$ and $G$-equivariant isomorphisms $\psi_1:i_1^*(E)\to E_1$, $\psi_2:i_2^*(E)\to E_2$
compatible with $\phi$.

(ii) For any fppf $G$-torsor $E$ over $A$, there is a natural isomorphism
$i_1^*(E)\cup_{\id} i_2^*(E)\xrightarrow{\cong} E$ of $G$-torsors over $A$.
\end{lem}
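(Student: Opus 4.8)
The plan is to realize the torsors as affine schemes and to glue their coordinate algebras along the Milnor square~\eqref{eq:milnor-sq}. Since $G$ is affine, faithfully flat and locally of finite presentation, every fppf $G$-torsor is representable by an affine scheme that is faithfully flat and locally of finite presentation over its base: these three properties descend along the fppf covering that trivializes the torsor. Hence I may write $E_\nu=\Spec S_\nu$ ($\nu=1,2$), where $S_\nu$ is a faithfully flat, finitely presented $A_\nu$-algebra equipped with a coaction $\rho_\nu\colon S_\nu\to S_\nu\otimes_R\mathcal{O}(G)$ making it a torsor, i.e. the map $\mu_\nu\colon S_\nu\otimes_{A_\nu}S_\nu\to S_\nu\otimes_R\mathcal{O}(G)$, $s\otimes s'\mapsto(s\otimes1)\rho_\nu(s')$, is an isomorphism. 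Dualizing $\phi$ identifies $\bar S:=S_1\otimes_{A_1}B$ with $S_2\otimes_{A_2}B$ as $G$-comodule $B$-algebras.

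The engine of the proof is Milnor patching for flat modules. Because $j_1$ is surjective, the sequence
\[
0\to A\to A_1\oplus A_2\to B\to 0,\qquad (a_1,a_2)\mapsto j_1(a_1)-j_2(a_2),
\]
is exact. Tensoring it over $A$ with any flat $A$-module $M$ keeps it exact, which shows that every flat $A$-module is canonically the fiber product of its two base changes; I will call this the \emph{flat $\Rightarrow$ patched} principle. Conversely, a flat-module analogue of Milnor's theorem \cite{Milnor} asserts that if $M_1,M_2$ are flat over $A_1,A_2$ and are glued by an isomorphism over $B$, then $M_1\times_B M_2$ is flat over $A$ and its base changes to $A_1$ and $A_2$ recover $M_1$ and $M_2$; surjectivity of $j_1$ is exactly what makes this work.

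For (i) I set $S:=S_1\times_{\bar S}S_2$, an $A$-algebra sitting in the exact sequence $0\to S\to S_1\oplus S_2\to\bar S\to0$. By the patching input $S$ is flat and finitely presented over $A$ and the projections give isomorphisms $\psi_\nu\colon i_\nu^*(E)=\Spec(S\otimes_A A_\nu)\to E_\nu$ compatible with $\phi$; faithful flatness follows because $\Spec A_1\sqcup\Spec A_2\to\Spec A$ is surjective and the $S_\nu$ are faithfully flat. Since $\mathcal{O}(G)$ is flat over $R$, the functor $-\otimes_R\mathcal{O}(G)$ preserves the above exact sequence, so $S\otimes_R\mathcal{O}(G)=(S_1\otimes_R\mathcal{O}(G))\times_{\bar S\otimes_R\mathcal{O}(G)}(S_2\otimes_R\mathcal{O}(G))$; as $\rho_1$ and $\rho_2$ agree over $B$ (because $\phi$ is $G$-equivariant) they glue to a coaction $\rho$ on $S$. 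Finally, both $S\otimes_A S$ and $S\otimes_R\mathcal{O}(G)$ are flat over $A$, hence patched; the map $\mu$ built from $\rho$ base-changes along $A\to A_\nu$ to $\mu_\nu$, which is an isomorphism, compatibly over $B$, so $\mu$ is an isomorphism of fiber products. Thus $E:=\Spec S$ is a $G$-torsor with the desired $\psi_1,\psi_2$.

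Statement (ii) is the \emph{flat $\Rightarrow$ patched} principle applied directly: for an fppf $G$-torsor $E=\Spec S$ over $A$, the algebra $S$ is faithfully flat, so tensoring the displayed sequence with $S$ exhibits the canonical map $S\to(S\otimes_A A_1)\times_{S\otimes_A B}(S\otimes_A A_2)$ as an isomorphism. This is precisely the asserted natural isomorphism $i_1^*(E)\cup_{\id}i_2^*(E)\xrightarrow{\cong}E$, and it is $G$-equivariant since every map involved is. I expect the only real difficulty to be the flat-module patching of the second paragraph---proving that the glued algebra $S$ is itself flat over $A$, rather than merely its two pieces. Milnor's classical argument is written for finitely generated projective modules, whereas here $S_\nu$ is usually infinitely generated over $A_\nu$, so one must run the argument in the flat setting, where surjectivity of $j_1$ is indispensable. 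Once this is secured, promoting the statement from modules to comodule algebras and checking the torsor condition are formal, since the auxiliary factor $\mathcal{O}(G)$ is $R$-flat and therefore respects all the fiber products in sight.
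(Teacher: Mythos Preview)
Your proposal is correct and follows essentially the same route as the paper: construct $E$ as the spectrum of the fibered product $S=S_1\times_{\bar S}S_2$ and verify flatness, the $G$-action, and the torsor condition by flat-module patching over the Milnor square. The paper handles the flat-patching step you flagged as the real difficulty by citing Ferrand~\cite[Th.~2.2]{Ferrand}, and it deduces local finite presentation not from patching but by fppf descent from $G$ via the isomorphism $E\times_A E\cong G_A\times_A E$ using~\cite[Prop.~2.7.1]{EGAIV-2}.
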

\begin{proof}
Since $G$ is affine, $E_1$ and $E_2$
 are affine over the respective bases, see e.g~\cite[\S 6.4]{Neron-book}. Let $G=\Spec(T)$,
 $E_1=\Spec(S_1)$, $E_2=\Spec(S_2)$.
Then $j_1^*(E_1)=\Spec(S_1\otimes_{A_1} B)$ and $j_2^*(E_2)=\Spec(S_2\otimes_{A_2} B)$ are isomorphic $B$-algebras.
We define $E$ to be the spectrum of the fibered product $S$ of rings $S_1$ and $S_2$ over
 $S_1\otimes_{A_1} B$, where the homomorphism $S_2\to S_1\otimes_{A_1} B$ factors through
$\phi:S_2\otimes_{A_2} B\to S_1\otimes_{A_1} B$. In other words, $E$ is the push-out of the diagram
\begin{equation}
\xymatrix@R=20pt@C=35pt{
j_1^*(E_1)\ar[d]^{(j_1)_{E_1}}\ar[r]^{\phi}&j_2^*(E_2)\ar[r]^{(j_2)_{E_2}}&E_2\\
E_1&&\\
}
\end{equation}
in the category of ringed spaces, see e.g.~\cite[Th. 5.1]{Ferrand}.
Note that the closed embedding $(j_1)_{E_1}$ base-changes to a closed embedding $E_2\to E$, and $j_1^*(E_1)\cong E_1\times_E E_2$.

By the universal property of the push-out $E$ is naturally
an $A$-scheme.
Clearly, considered as an $A$-module, $S$ is the Milnor-type patching of the flat $A_1$-module $S_1$ and
the flat $A_2$-module $S_2$ in the sense of~\cite[Th. 2.2]{Ferrand}. In particular, $S$ is flat over $A$
and $S\otimes_A A_i\cong S_i$, $i=1,2$.
Since $T$ is a faithfully flat $R$-algebra, $G_A\times_A E$ is faithfully flat over $E$,
hence
$$
G_A\times_A E\cong (G_{A_1}\times_{A_1} E_1)\cup_{id\times \phi}(G_{A_2}\times_{A_2} E_2)
$$
is the push-out of $G_{A_1}\times_{A_1} E_1$ and $G_{A_2}\times_{A_2} E_2$.
The universal property of push-out together with the $G$-equivariance of $\phi$ then defines an action of $G_A$ on $E$,
compatible with the actions of $G_{A_1}$ on $E_1$ and $G_{A_2}$ on $E_2$.

As a topological space, $E$ is isomorphic to the union of images of $E_1$ and $E_2$~\cite[Scolie 4.3]{Ferrand}, hence
$E\to\Spec(A)$ is surjective, and $S$ is faithfully flat over $A$. Then tensoring with $S$ also preserves fibered products
of $R$-algebras, hence
$$
E\times_A E\cong (E_{A_1}\times_{A_1} E_1)\cup_{id\times \phi}(E_{A_2}\times_{A_2} E_2)\cong
(E_1\times_{A_1} E_1)\cup_{\phi\times \phi}(E_2\times_{A_2} E_2).
$$
Since $G_{A_i}\times_{A_i} E_i$ is isomorphic to $E_i\times_{A_i}E_i$, $i=1,2$, by means of the map $(g,x)\mapsto (gx,x)$,
we conclude that
$$
E\times_A E\cong (G_{A_1}\times_{A_1} E_1)\cup_{\id\times \phi}(G_{A_2}\times_{A_2} E_2)\cong G_A\times_A E.
$$
Since $E\to\Spec(A)$ is a faithfully flat and quasi-compact morphism, and $G$ is locally of finite presentation, we conclude that
$E\to\Spec(A)$ is also locally of finite presentation by~\cite[Proposition 2.7.1]{EGAIV-2}.
Hence $E$ is an fppf $G$-torsor over $A$.

To prove the last claim of the lemma, let $E$ be any torsor over $A$. Note that both
$E$ and $i_1^*(E)\cup_{\id} i_2^*(E)$ are affine and flat over $A$, and
there is a morphism of $A$-schemes $i_1^*(E)\cup_{\id} i_2^*(E)\to E$.
By~\cite[Th. 2.2 (iv)]{Ferrand} patching of flat modules is an equivalence of categories, hence
it is an isomorphism.
Its $G$-equivariance is clear, since $G_A\times_A (i_1^*(E)\cup_{\id} i_2^*(E))$ is the patching
of $G_{A_1}\times_{A_1} i_1^*(E)$ and $G_{A_2}\times_{A_2} i_2^*(E)$.
\end{proof}

\begin{lem}\label{lem:ext}
In the setting of Lemma~\ref{lem:milnor-patch},
assume that $j_1$ has a section $s:B\to A_1$  which is a homomorphism of $R$-algebras, and let $r:A_2\to A$ be the induced section of $i_2$.
If $E_1$ is extended from $B$, then $E=E_1\cup_\phi E_2$ is extended from $A_2$, i.e. $E_1\cong s^*(j_1^*(E_1))$ implies
$E\cong r^*(E_2)$. In particular, $E$ is a trivial $G$-torsor if and only if $E_1$ and $E_2$ are trivial $G$-torsors.
\end{lem}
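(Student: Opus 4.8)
We are in the situation of a Milnor square where $j_1$ admits an $R$-algebra section $s\colon B\to A_1$, inducing a section $r\colon A_2\to A$ of $i_2$. We want to show that if $E_1\cong s^*(j_1^*(E_1))$ (i.e. $E_1$ is extended from $B$ along $s$), then $E=E_1\cup_\phi E_2\cong r^*(E_2)$. The plan is to use the gluing construction and functoriality established in Lemma~\ref{lem:milnor-patch}, transporting the isomorphism $E_1\cong s^*(j_1^*E_1)$ through the patching.

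**The main argument.**
First I would spell out the relationship between the section data and the fiber square. Since $j_1 s=\id_B$ and $r=i_1 s$ (more precisely $r$ is characterized by $i_2 r=\id_{A_2}$ and $i_1 r=s j_2$, using that $A$ is the fiber product), the two torsors $E_1$ and $r^*(E_2)$ restrict compatibly to $B$: applying $j_1^*$ to $r^*(E_2)=i_1^* (\text{something over }A)$ and comparing along $\phi$ shows that both $E_1$ and the restriction of $r^*(E_2)$ to $A_1$ become identified over $B$. Concretely, the hypothesis $E_1\cong s^*(j_1^*E_1)$ says $E_1$ is pulled back from its own restriction to $B$; pushing this restriction back up to $A_2$ via $\phi$ and then to $A$ via $r$ gives exactly a candidate isomorphism. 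I would then assemble a $G$-equivariant isomorphism $\psi\colon E\xrightarrow{\cong} r^*(E_2)$ by invoking the universal property of the push-out used to define $E$ in Lemma~\ref{lem:milnor-patch}(i): one supplies compatible maps on $E_1$ and $E_2$ whose restrictions to $j_1^*(E_1)\cong j_2^*(E_2)$ agree, and the patching produces the global isomorphism. The compatibility over $B$ is precisely guaranteed by $\phi$ together with the extendedness hypothesis $E_1\cong s^*(j_1^*E_1)$.

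**The last sentence.**
For the final claim, the ``if'' direction is immediate: if $E_1$ and $E_2$ are trivial, then $E_1$ is in particular extended from $B$ (it is the trivial torsor, hence pulled back from the trivial torsor over $B$), so by the part just proved $E\cong r^*(E_2)\cong r^*(\text{trivial})$ is trivial. For the converse, if $E$ is trivial then $E_1\cong i_1^*(E)$ and $E_2\cong i_2^*(E)$ by the natural isomorphism of Lemma~\ref{lem:milnor-patch}(ii), and pullbacks of the trivial torsor are trivial.

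**The main obstacle.**
The delicate point is verifying that the candidate isomorphism on the $A_1$-part and the $A_2$-part genuinely match over $B$, i.e. that the gluing datum is well defined, rather than merely matching abstractly. This amounts to chasing the identity $\phi\circ(\text{restriction of the }E_2\text{-side})=\text{restriction of the }E_1\text{-side}$ through the isomorphism $E_1\cong s^*(j_1^*E_1)$, and keeping track of the $G$-equivariance at each stage. Once this compatibility is checked, the universal property of the push-out does the rest, so the bulk of the work is bookkeeping of the various pullbacks $i_1^*, i_2^*, j_1^*, j_2^*, s^*, r^*$ and confirming they commute as required by the Cartesian square.
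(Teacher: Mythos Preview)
Your proposal is correct and follows essentially the same route as the paper. The only difference is a matter of which push-out you invoke: you use the universal property of $E=E_1\cup_\phi E_2$ to build a map $E\to r^*(E_2)$, whereas the paper instead applies Lemma~\ref{lem:milnor-patch}(ii) to decompose $r^*(E_2)$ as $i_1^*(r^*(E_2))\cup_{\id} E_2$, computes $i_1^*(r^*(E_2))=(s\circ j_2)^*(E_2)=s^*(j_2^*(E_2))$ directly from $i_1\circ r=s\circ j_2$, and then matches this with $E_1$ via $s^*(\phi)$ and the hypothesis $E_1\cong s^*(j_1^*(E_1))$. Both arguments reduce to the same identification of pieces over $A_1$ and $A_2$; the paper's version has the mild advantage that the ``main obstacle'' you flag (checking compatibility over $B$) becomes automatic once you recognize the map on the $A_1$-side is literally $s^*(\phi)$. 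One small slip: your line ``$r=i_1 s$'' is ill-typed, but your parenthetical correction $i_1 r=s j_2$ is the right identity and is exactly what the paper uses.
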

\begin{proof}
Since $E_2$ is faithfully flat over $A_2$, $r^*(E_2)=E_2\times_{A_2}A$ is faithfully flat over $A$.
By Lemma~\ref{lem:milnor-patch} (ii) the $G$-torsor
$r^*(E_2)$ over $A$ is isomorphic to the push-out of the $G$-torsors
$i_1^*(r^*(E_2))$ over $A_1$ and $i_2^*(r^*(E_2))=(i_2\circ r)^*(E_2)=\id_{A_2}^*(E_2)=E_2$
over $A_2$, by means of the trivial isomorphism of their restrictions to $B$. One has
$$
i_1^*(r^*(E_2))=(i_1\circ r)^*(E_2)=(s\circ j_2)^*(E_2)=s^*(j_2^*(E_2)).
$$
Then the isomorphism $\phi:j_1^*(E_1)\to j_2^*(E_2)$ extends by means of $s^*$ to the isomorphism  of $A_1$-torsors
$E_1\cong s^*(j_1^*(E_1))\to i_1^*(r^*(E_2))$. Hence $r^*(E_2)\cong E_1\cup_\phi E_2$ by the unicity of the push-out.
\end{proof}

\begin{proof}[Proof of Theorem~\ref{thm:H1}]
(i) We check the conditions of Lemma~\ref{lem:dh-functor} for the functor $H^1_\tau(-,G)$. For any square~\eqref{eq:sq-F},
Lemma~\ref{lem:ext} readily implies that that a
$G$-torsor $E$ over $A$ is trivial, once $i^*_2(E)$ and $i_1^*(E)$ are trivial.
In the bijective case, for any $E$ over $A$, we know that $i^*_2(E)$ is extended from $A_2$.
Then $E$ is extended from $A_1$ by Lemma~\ref{lem:ext}.

(ii) If $G$ is smooth, then for any commutative $R$-algebra $A$ and for any nilpotent ideal $J$ of $A$, the map
$H^1_{fppf}(A,G)\to H^1_{fppf}(A/J,G)$ is
bijective~\cite{Gr-BrauerIII,Strano-henspairs}. Then by Remark~\ref{rem:nilp} for any discrete Hodge algebra
$A=R[x_1,\ldots,x_n]/I$ over $R$
one has $H^1_{fppf}(A,G)\cong H^1_{fppf}(R[x_1,\ldots,x_n]/I_0,G)$, where $A'=R[x_1,\ldots,x_n]/I_0$
is a square-free discrete Hodge algebra. Then, clearly, $H^1_\tau(A,G)\to H^1_\tau(A',G)$ is injective.
If $H^1_\tau(A',G)\to H^1_\tau(R,G)$ has trivial kernel, this implies that $H^1_\tau(A,G)\to H^1_\tau(R,G)$ has trivial kernel.
In the bijective case, we conclude that $H^1_\tau(A,G)\to H^1_\tau(R,G)$ is injective, and its surjectivity
is automatic since $A\to R$ has a section.
\end{proof}

\section{Torsors under reductive group schemes}\label{sec:H1}

In the present section we apply the results of~\S~\ref{sec:Milnor} to isotropic reductive groups and prove
Corollary~\ref{cor:torsors}.

\begin{lem}\label{lem:TG'GT}
Let $R$ be a regular semilocal domain, $K$ be the fraction field of $R$. Let $G,G'$ be reductive $R$-groups, and $T$ be an
$R$-group of multiplicative type such that there is a short exact sequence
$$
(a)\quad 1\to G'\to G\to T\to 1\quad\mbox{or}\quad (b)\quad 1\to T\to G'\to G\to 1
$$
of $R$-group schemes.
For any $n\ge 1$, if the natural maps
$$
H^1_{\et}(R[x_1,\ldots,x_n],G')\to H^1_{\et}(K[x_1,\ldots,x_n],G')
$$
and
$H^1_{\et}(R,G)\to H^1_{\et}(K,G)$ have trivial kernels, then
$$
H^1_{\et}(R[x_1,\ldots,x_n],G)\to H^1_{\et}(K[x_1,\ldots,x_n],G)
$$
has trivial
kernel.
\end{lem}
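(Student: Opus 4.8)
The plan is to run the long exact cohomology sequence attached to (a), respectively (b), over $S=R[x_1,\ldots,x_n]$ and over $L=K[x_1,\ldots,x_n]$, using the multiplicative type group $T$ as a bridge between $G$ and $G'$. The key external input is that $T$ is of multiplicative type over the regular semilocal domain $R$, so that its \'etale cohomology is homotopy invariant and injects into the generic fibre, giving $H^i_{\et}(S,T)\cong H^i_{\et}(R,T)\hookrightarrow H^i_{\et}(K,T)\cong H^i_{\et}(L,T)$ for the relevant $i$; this is exactly where the cited results on tori over regular rings are used. I also use that $T$-points and $T$-torsors over $K[x_1,\ldots,x_n]$ are constant, i.e. $T(L)=T(K)$ and $H^1_{\et}(L,T)\cong H^1_{\et}(K,T)$. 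Let $\xi\in H^1_{\et}(S,G)$ be a class that becomes trivial over $L$; I want $\xi$ trivial. First I normalize at $x_1=\ldots=x_n=0$: the evaluation maps $S\to R$ and $L\to K$ fit into a commutative square with the restrictions to the fraction fields, so the image of $\xi$ in $H^1_{\et}(R,G)$ dies in $H^1_{\et}(K,G)$, and by the hypothesis that $H^1_{\et}(R,G)\to H^1_{\et}(K,G)$ has trivial kernel, $\xi$ becomes trivial after setting all $x_i=0$.

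Next I lift $\xi$ along the homomorphism $G'\to G$ present in the sequence, writing $f\colon H^1_{\et}(-,G')\to H^1_{\et}(-,G)$ for the induced map. In case (a), $\xi$ maps to a class in $H^1_{\et}(S,T)$ which is trivial over $L$, hence trivial over $S$ by the injectivity above, so $\xi=f_S(\xi')$ for some $\xi'\in H^1_{\et}(S,G')$. In case (b), $T$ is central in $G'$ (a connected group acts trivially on a multiplicative type group by conjugation), so the obstruction to lifting $\xi$ is a class in $H^2_{\et}(S,T)$; it is trivial over $L$ and hence over $S$, and again $\xi=f_S(\xi')$. I now use the normalization: the value $\xi'|_{x=0}\in H^1_{\et}(R,G')$ maps to the trivial class in $H^1_{\et}(R,G)$, so it lies in the image of the connecting map from $T(R)$ (case (a)), respectively of $H^1_{\et}(R,T)$ (case (b)). Twisting $\xi'$ by the corresponding class extended from $R$ — via the action of $H^1_{\et}(S,T)$ on the fibre of $f_S$ in the central case, and via twisting the exact sequence by $\xi'$ in case (a) — I may assume that $\xi'|_{x=0}$ is trivial.

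With this normalization in place, $\xi'|_L$ lies in the image of the connecting map, respectively of $H^1_{\et}(L,T)$, so it is governed by a parameter in $T(L)=T(K)$, respectively in $H^1_{\et}(L,T)\cong H^1_{\et}(K,T)$, which is constant in the variables $x_i$. Evaluating at $x=0$ and using that $\xi'|_{x=0}$ is trivial forces this constant parameter to come from $G(K)$ (case (a)), respectively to lie in the image of $G'(K)$ (case (b)), which means precisely that $\xi'|_L$ is itself trivial. At this point the hypothesis that $H^1_{\et}(S,G')\to H^1_{\et}(L,G')$ has trivial kernel yields $\xi'$ trivial over $S$, and therefore $\xi=f_S(\xi')$ is trivial, as desired.

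The hard part will be the passage through $T$: I must know that the obstruction class in $H^1_{\et}(S,T)$ (case (a)), and especially in $H^2_{\et}(S,T)$ (case (b)), vanishes as soon as it vanishes generically. For $H^2$ of a group of multiplicative type this is a Brauer-type purity and injectivity statement over a regular semilocal domain, combined with homotopy invariance in the added variables, and it is the most delicate input of the argument; everything else is a diagram chase organized around the evaluation at $x=0$. A secondary subtlety is that in case (a) the connecting map is not central, so the normalization of $\xi'$ must be carried out by twisting the sequence by $\xi'$ rather than by a clean group action; here one checks that twisting is compatible both with restriction to $L$ and with evaluation at $x=0$, which is what makes the constancy argument over $L$ go through.
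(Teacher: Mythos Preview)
Your proof is correct and follows essentially the same strategy as the paper: lift $\xi$ to $\xi'\in H^1(S,G')$ using injectivity of $H^i(S,T)\to H^i(L,T)$ (for which the paper cites \cite{CTS} directly, in particular Theorem~4.3 there for the $H^2$ step), then use the action of $T(S)=T(R)$ in case~(a) (Serre \S5.5) or of $H^1(S,T)\cong H^1(R,T)$ in case~(b) (Serre \S5.7) on the fibre over $\xi$, together with the constancy $T(L)=T(K)$, respectively $H^1(L,T)\cong H^1(K,T)$, and evaluation at $\mathbf{x}=0$, to force $\xi'|_L$ trivial and then invoke the hypothesis on $G'$. The only differences are organizational: the paper adjusts its lift $\eta$ by $\sigma\in T(R)$ at the end rather than normalizing $\xi'|_{x=0}$ up front, and in case~(a) it phrases the adjustment via the standard right-shift action of $T(S)$ on $H^1(S,G')$ rather than via twisting the sequence by $\xi'$ (these amount to the same thing); note also that in your case~(b) the constant parameter lies in the image of $G(K)$, not $G'(K)$, in the exact sequence $G(K)\to H^1(K,T)\to H^1(K,G')$.
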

\begin{proof}
Since $G$ and $G'$ are smooth, we can replace their \'etale cohomology by fppf.
For shortness, write $\mathbf{x}$
instead of $x_1,\ldots,x_n$, and $\mathbf{x}=0$ instead of $x_1=\ldots=x_n=0$.

Consider first the case $(a)$. Let $S$ be any of $R$, $R[\mathbf{x}]$, $K$, $K[\mathbf{x}]$, then we have
an exact sequence of pointed sets
$$
T(S)\to H^1_{fppf}(S,G')\to
H^1_{fppf}(S,G)\to H^1_{fppf}(S,T).
$$
By~\cite[Lemma 2.4]{CTS} $H^1_{fppf}(S,T)\cong H^1_{fppf}(S[x],T)$,
and by~\cite[Theorem 4.1]{CTS} $H^1_{fppf}(R,T)\to H^1_{fppf}(K,T)$ is injective.
Hence $H^1_{fppf}(R[\mathbf{x}],T)\to H^1_{fppf}(K[\mathbf{x}],T)$ is also injective.
Hence any
$\xi\in\ker\bigl(H^1_{fppf}(R[\mathbf{x}],G)\to H^1_{fppf}(K[\mathbf{x}],G)\bigr)$ lifts to an element $\eta\in H^1_{fppf}(R[\mathbf{x}],G')$.
Let $\bar\eta$ be the image of $\eta$ in $H^1_{fppf}(G',K[\mathbf{x}])$. Then there is $\theta\in T(K[\mathbf{x}])$ such that
$\bar\eta$ is the image of $\theta$. Since $T$ is a group of multiplicative type, we have $T(K[\mathbf{x}])=T(K)$. Hence
$\bar\eta$ is extended from $K$. By the assumptions on $\xi$ and $G$, the torsor $\xi|_{\mathbf{x}=0}$ is trivial, hence $\eta|_{\mathbf{x}=0}$
has a preimage $\sigma\in T(R)$. Clearly, the image of $\sigma$ in $T(K)$ maps to $\bar\eta|_{\mathbf{x}=0}$.
Note that the group $T(R[\mathbf{x}])=T(R)$ acts on
$H^1_{fppf}(R[\mathbf{x}],G')$ by right shifts, see~\cite[\S 5.5]{Serre-gal}.
Since $\bar\eta$
is extended from $K$, the image of $\eta\cdot\sigma^{-1}$ in $H^1_{fppf}(K[\mathbf{x}],G')$ is trivial. Hence $\eta\cdot\sigma^{-1}$
is trivial. Hence $\eta$ is extended from $R$. Hence $\xi$ is extended from $R$. Then $\xi$ is trivial by the assumption on $G$.

Consider the case $(b)$. For each $S$ as above, we have
an exact sequence
$$
H^1_{fppf}(S,T)\to H^1_{fppf}(S,G')\to
H^1_{fppf}(S,G)\to H^2_{fppf}(S,T).
$$
By~\cite[Theorem 4.3]{CTS}
$H^2_{fppf}(R[\mathbf{x}],T)\to H^2_{fppf}(K(\mathbf{x}),T)$ is injective, hence
$H^2_{fppf}(R[\mathbf{x}],T)\to H^2_{fppf}(K[\mathbf{x}],T)$ is also injective.
The rest of the proof is the same as in the previous case, with the only difference that one uses the action
of the commutative group $H^1_{fppf}(R[\mathbf{x}],T)\cong H^1_{fppf}(R,T)$ on
$H^1_{fppf}(R[\mathbf{x}],G')$, which is well-defined since $T$ is central in $G'$; see~\cite[\S 5.7]{Serre-gal}.
\end{proof}

The following statement for simply connected semisimple reductive groups is a particular case of~\cite[Theorem 1.6]{PaStV}.
We use this case, as well as the result of I. Panin and R. Fedorov on the Serre--Grothendieck conjecture~\cite{PaF},
to obtain the case of general reductive groups.

\begin{thm}\label{thm:ker-poly}
Assume that $R$ is a regular semilocal domain that contains an infinite field, and let $K$ be its fraction field. Let
$G$ be a reductive group scheme over $R$
of isotropic rank $\ge 1$. Then for any $n\ge 1$ the natural map
$$
H^1_{\et}(R[x_1,\ldots,x_n],G)\to H^1_{\et}(K[x_1,\ldots,x_n],G)
$$
has trivial kernel.
\end{thm}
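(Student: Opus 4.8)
The plan is to deduce the general reductive case from the simply connected semisimple case of~\cite[Theorem 1.6]{PaStV} by a short dévissage along the exact sequences of Lemma~\ref{lem:TG'GT}, using the Grothendieck--Serre conjecture over regular semilocal domains containing an infinite field (the theorem of Fedorov and Panin,~\cite{PaF}) to supply, at each stage, the semilocal hypothesis $H^1_{\et}(R,-)\to H^1_{\et}(K,-)$ that Lemma~\ref{lem:TG'GT} demands. Note first that the isotropy hypothesis is inherited by the relevant groups: since a semisimple group is perfect, every normal semisimple $R$-subgroup of $G$ lies in the derived subgroup $G^{\der}$, and because $G$ is the almost-direct product of $\rad(G)$ and $G^{\der}$, the normal semisimple subgroups of $G$, of $G^{\der}$, and of the simply connected cover $\widetilde{G}=(G^{\der})^{\scl}$ correspond to one another under the canonical central isogenies. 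A central isogeny neither creates nor destroys $\Gm$-subgroups, so all three groups have isotropic rank $\ge 1$, which is what the base case requires.

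The base case is then immediate: for the simply connected semisimple $\widetilde{G}$ of isotropic rank $\ge 1$, the map $H^1_{\et}(R[\mathbf{x}],\widetilde{G})\to H^1_{\et}(K[\mathbf{x}],\widetilde{G})$ has trivial kernel by~\cite[Theorem 1.6]{PaStV}. For the first dévissage step I would take the central isogeny $\widetilde{G}\to G^{\der}$, whose kernel $\mu$ is a finite central $R$-subgroup of multiplicative type; this gives a sequence $1\to\mu\to\widetilde{G}\to G^{\der}\to 1$ of type (b) in Lemma~\ref{lem:TG'GT}. Combining the polynomial statement just obtained for $G'=\widetilde{G}$ with the semilocal statement for $G^{\der}$ from~\cite{PaF}, the lemma yields trivial kernel for $H^1_{\et}(R[\mathbf{x}],G^{\der})\to H^1_{\et}(K[\mathbf{x}],G^{\der})$. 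For the second step I would use the coradical sequence $1\to G^{\der}\to G\to G/G^{\der}\to 1$, in which $T=G/G^{\der}$ is a torus, so it is of type (a); applying Lemma~\ref{lem:TG'GT} with $G'=G^{\der}$, now feeding in the previous step for the polynomial input and~\cite{PaF} for the semilocal input $H^1_{\et}(R,G)\to H^1_{\et}(K,G)$, gives exactly the assertion for $G$.

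The routine points --- that $\mu$ and $G/G^{\der}$ are of multiplicative type, and that the infinite field contained in $R$ persists in $K$, $R[\mathbf{x}]$ and $K[\mathbf{x}]$ --- are guaranteed by the structure theory of reductive group schemes~\cite{SGA3}. I expect the main obstacle to lie not in the dévissage itself but in ensuring that the two external inputs are genuinely available in the stated generality: the simply connected, isotropic polynomial result of~\cite{PaStV} as the base case, and, crucially, the Grothendieck--Serre statement over a regular \emph{semilocal} (not merely local) domain containing an infinite field for each of $G^{\der}$ and $G$, which is where the full strength of~\cite{PaF} is needed.
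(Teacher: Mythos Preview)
Your proposal is essentially the paper's own proof: the same two exact sequences $1\to C\to G^{\scl}\to\der(G)\to 1$ and $1\to\der(G)\to G\to\corad(G)\to 1$, the same application of Lemma~\ref{lem:TG'GT} in the two directions (b) then (a), and the same appeal to~\cite{PaF} for the semilocal hypothesis at each stage. The one point you gloss over is that~\cite[Theorem~1.6]{PaStV} is stated for \emph{simple} simply connected groups, not arbitrary semisimple simply connected ones; the paper handles this by noting that the passage from simple to general simply connected is immediate via the Faddeev--Shapiro lemma, as in the proof of~\cite[Theorem~11.1]{PaStV}.
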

\begin{proof}
In~\cite[Theorem 1.6]{PaStV} the claim is proved under the assumption that $G$ is simple and simply connected.
The case where $G$ is an arbitrary simply connected reductive group follows immediately by the Faddeev-Shapiro lemma,
as in the proof of~\cite[Theorem 11.1]{PaStV}. Now let $G$ be an arbitrary reductive group, let
$\der(G)$  be its derived subgroup in the sense of~\cite{SGA3}, and let $G^{sc}$ be the simply connected
cover of $\der(G)$. Then
$G^{sc}$ and $\der(G)$ are semisimple reductive groups satisfying the same isotropy condition as $G$.
There are two short exact sequences of reductive $R$-groups $1\to\der(G)\to G\to\corad(G)\to 1$, and
$1\to C\to G^{sc}\to \der(G)\to 1$,
where $\corad(G)$ and $C$ are $R$-groups of multiplicative type~\cite[Exp. XXII]{SGA3}.
 Note that
for any reductive group $H$ over $R$, the map $H^1_{\et}(R,H)\to H^1_{\et}(K,H)$ has trivial kernel by the corresponding
case of the Serre--Grothendieck conjecture~\cite[Theorem 1]{PaF}.
Since reductive groups are smooth by definition, we can replace the \'etale topology by fppf.
Hence these two short exact sequences are subject to
Lemma~\ref{lem:TG'GT}.
\end{proof}

The following statement is a slight extension of~\cite[Corollary 1.7]{PaStV}.

\begin{thm}\label{thm:et-ker}
Assume that $R$ is a regular domain that contains a field of characteristic $0$. Let $G$ be a reductive group scheme over $R$
of isotropic rank $\ge 1$. Then for any $n\ge 1$ the map
$$
H^1_{\et}(R[x],G)\to H^1_{\et}(R,G),
$$
induced by evaluation at $x=0$, has trivial kernel.
\end{thm}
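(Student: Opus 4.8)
The plan is to reduce to the case of a simply connected group, which is covered by \cite[Corollary 1.7]{PaStV}, and then to propagate the statement to an arbitrary reductive group by the same dévissage used for Theorem~\ref{thm:ker-poly}, only adapted to the evaluation map $H^1_{\et}(R[x],G)\to H^1_{\et}(R,G)$ in place of restriction to the generic fibre. Since we are in characteristic $0$, all the groups involved, reductive groups and groups of multiplicative type alike, are smooth, so \'etale and fppf cohomology agree and may be used interchangeably. Writing $\der(G)$ for the derived subgroup and $G^{sc}$ for the simply connected cover of $\der(G)$, both inherit the isotropic-rank hypothesis from $G$, and they sit in the two short exact sequences $1\to\der(G)\to G\to\corad(G)\to 1$ and $1\to C\to G^{sc}\to\der(G)\to 1$ with $\corad(G)$ and $C$ of multiplicative type \cite[Exp.~XXII]{SGA3}. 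For simple simply connected $G$ the base case is \cite[Corollary 1.7]{PaStV}, and the general simply connected case follows from the Faddeev--Shapiro lemma exactly as in the proof of Theorem~\ref{thm:ker-poly}. One then deduces the statement for $\der(G)$ from that for $G^{sc}$ via the second sequence, and for $G$ from that for $\der(G)$ via the first.

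First I would establish an evaluation-at-$0$ analogue of Lemma~\ref{lem:TG'GT}: for a short exact sequence $1\to G'\to G\to T\to 1$ (case $(a)$) or $1\to T\to G'\to G\to 1$ with $T$ central (case $(b)$), where $T$ is of multiplicative type, triviality of the kernel of the evaluation map for $G'$ forces the same for $G$. The advantage over the generic-fibre version is that no injectivity into $K$ is needed; it suffices to have the homotopy invariance $H^i_{\et}(R[x],T)\cong H^i_{\et}(R,T)$, which holds for $i=1$ over any regular $R$ by \cite[Lemma 2.4]{CTS}, and which I shall need also for $i=2$ in case $(b)$. Concretely, given $\xi\in H^1_{\et}(R[x],G)$ with $\xi|_{x=0}$ trivial, its image in $H^1_{\et}(R[x],T)$ (case $(a)$), respectively its obstruction in $H^2_{\et}(R[x],T)$ (case $(b)$), vanishes at $x=0$ and hence vanishes by homotopy invariance, so $\xi$ lifts to some $\eta\in H^1_{\et}(R[x],G')$. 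Since $\eta|_{x=0}$ maps to the trivial class in $H^1_{\et}(R,G)$, it lies in the orbit of the trivial class under the natural twisting action, of $T(R)=T(R[x])$ in case $(a)$ and of $H^1_{\et}(R[x],T)\cong H^1_{\et}(R,T)$ in case $(b)$, on $H^1_{\et}(R[x],G')$ (\cite[\S\S 5.5, 5.7]{Serre-gal}). Twisting $\eta$ by the corresponding element produces a lift that is trivial at $x=0$, which by the inductive hypothesis on $G'$ is trivial; hence $\xi$ is trivial.

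The step I expect to be the main obstacle, and the precise point where the hypothesis of characteristic $0$ becomes indispensable, is the homotopy invariance of the cohomology of $T$, in particular the injectivity $H^2_{\et}(R[x],T)\hookrightarrow H^2_{\et}(R,T)$ that produces the lift in case $(b)$. Reducing $T$ to the split pieces $\Gm$ and $\mu_n$, this amounts to the homotopy invariance of the Picard and Brauer groups of the regular ring $R$ together with that of $H^i_{\et}(-,\mu_n)$; the latter requires $n$ to be invertible on $R$, which is guaranteed in characteristic $0$ but fails in characteristic $p$ for the $p$-primary part of $C$. This is exactly why the Nisnevich statement of Corollary~\ref{cor:torsors} persists in all characteristics while the \'etale statement is confined to characteristic $0$. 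A lesser technical point is to verify that the twisting by $T(R)$, respectively by $H^1_{\et}(R,T)$, is compatible with evaluation at $x=0$, so that the diagram chase above is rigorous; this is routine but should be checked.
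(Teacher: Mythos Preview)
Your d\'evissage is correct, and the ingredients you single out do hold in characteristic~$0$; note in particular that in case $(b)$ the kernel $C$ is \emph{finite} of multiplicative type, so the required injectivity $H^2_{\et}(R[x],C)\hookrightarrow H^2_{\et}(R,C)$ is just the homotopy invariance of \'etale cohomology with locally constant coefficients of invertible torsion order, and no Brauer-group input is actually needed. The paper, however, takes a shorter path. It observes that the proof of \cite[Corollary~1.7]{PaStV} invokes the simply connected hypothesis only through the input \cite[Theorem~1.6]{PaStV}, and that this input has already been upgraded to arbitrary reductive $G$ in Theorem~\ref{thm:ker-poly}; one may therefore rerun that proof verbatim with Theorem~\ref{thm:ker-poly} substituted. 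In effect the d\'evissage is performed once, at the level of Theorem~\ref{thm:ker-poly} and over semilocal $R$ where the Colliot-Th\'el\`ene--Sansuc results apply directly, while the passage from that semilocal generic-fibre statement to the global evaluation statement (local-global principle together with the field case $H^1_{\et}(K[x],G)\hookrightarrow H^1_{\et}(K,G)$ in characteristic~$0$) is a uniform black-box argument that does not see whether $G$ is simply connected. Your route works but carries out a second d\'evissage, now over arbitrary regular $R$, duplicating effort already invested in Theorem~\ref{thm:ker-poly}.
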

\begin{proof}
In~\cite[Corollary 1.7]{PaStV} the claim is established under the assumption that $G$ is simple and simply connected.
The proof for any reductive group is exactly the same using Theorem~\ref{thm:ker-poly} instead
of its simply connected case~\cite[Theorem 1.6]{PaStV}.
\end{proof}

\begin{thm}\label{thm:Nis-bij}
Let $G$ be a reductive
group scheme over
a regular domain $R$ containing an infinite field $k$. Assume that $G$ has isotropic rank $\ge 1$.
Then the map
$$
H^1_{Nis}(R[x],G)\xrightarrow{x\mapsto 0} H^1_{Nis}(R,G)
$$
is a bijection.
\end{thm}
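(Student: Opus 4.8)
The plan is to establish surjectivity and injectivity separately, reducing injectivity by twisting to a trivial-kernel statement that can be checked on the regular local rings of $R$, where the results of \S\ref{sec:H1} become available. Surjectivity is formal: the structure map $R\to R[x]$ splits the evaluation $R[x]\xrightarrow{x\mapsto 0}R$, so $H^1_{Nis}(R,G)\to H^1_{Nis}(R[x],G)$ is a section of the map in question. For injectivity, given $\xi_1,\xi_2\in H^1_{Nis}(R[x],G)$ with $\xi_1|_{x=0}=\xi_2|_{x=0}=:\theta$, let $H={}^{\theta}G$ be the inner form of $G$ determined by $\theta$, again a reductive $R$-group scheme. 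The twisting bijection $H^1_{Nis}(S,G)\cong H^1_{Nis}(S,H)$, natural in the $R$-algebra $S$, sends $\theta_S$ to the base point (\cite{Serre-gal}); thus the images of $\xi_1,\xi_2$ in $H^1_{Nis}(R[x],H)$ both lie in the kernel of evaluation at $x=0$. Hence injectivity for $G$ follows once I show: for every $\theta$, writing $H={}^{\theta}G$, any $\xi\in H^1_{Nis}(R[x],H)$ with $\xi|_{x=0}$ trivial is itself trivial. Note that I never need the isotropy hypothesis to survive the twisting globally; it will be invoked only after localizing, where $H$ turns out to be isomorphic to $G$.

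Fix such $\theta,H,\xi$. The decisive reduction is local-global: it suffices to trivialize $\xi$ over $\mathcal{O}_{R,\mathfrak{p}}[x]$ for every prime $\mathfrak{p}$. Since $H$ is of finite presentation, a section over $\mathcal{O}_{R,\mathfrak{p}}[x]=\varinjlim_{f\notin\mathfrak{p}}R_f[x]$ spreads to a section over some $R_f[x]$ with $f\notin\mathfrak{p}$; as $\mathfrak{p}$ varies the opens $\Spec(R_f)$ cover $\Spec(R)$, so $\xi$ becomes trivial on a Zariski---hence Nisnevich---cover of $R[x]$, i.e. trivial in $H^1_{Nis}(R[x],H)$. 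Now fix $\mathfrak{p}$, put $\mathcal{O}=\mathcal{O}_{R,\mathfrak{p}}$ (a regular local domain containing the infinite field $k$), and $K=\Frac(R)$. As every Nisnevich cover of a field splits, $H^1_{Nis}(K,-)$ is trivial; hence the $G$-torsor $\theta$ is generically trivial and, by the Serre--Grothendieck conjecture \cite[Theorem 1]{PaF}, trivial over $\mathcal{O}$. Therefore ${}^{\theta}G\cong G$ over $\mathcal{O}$, so $H_{\mathcal{O}}$ is reductive of isotropic rank $\ge 1$; and $\xi|_{x=0}$, trivial over $R$ by hypothesis, is a fortiori trivial over $\mathcal{O}$.

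It remains to show that an $H_{\mathcal{O}}$-torsor over $\mathcal{O}[x]$ which is trivial at $x=0$ is trivial. In characteristic $0$ this is Theorem~\ref{thm:et-ker} applied to $\mathcal{O}$. In arbitrary characteristic I would instead invoke Theorem~\ref{thm:ker-poly}, which reduces triviality over $\mathcal{O}[x]$ to triviality over $K[x]$; as the class is already trivial at $x=0$ over $K$, what is needed is the affine homotopy invariance of $H^1_{\et}(-,H)$ over the infinite field $K$, known for isotropic reductive groups (\cite{PaStV}). The main obstacle is thus not a single hard lemma but the coordination of these inputs: turning the Nisnevich (equivalently, rationally trivial) datum over $R$ into genuine triviality over each regular local ring via Serre--Grothendieck, and supplying the field-level homotopy invariance over $K$ in positive characteristic---the step that makes the infiniteness of $k$ indispensable.
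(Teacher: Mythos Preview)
Your twisting reduction and the local analysis over each regular local ring are essentially correct, but the local--global step in the middle is invalid as written. From the fact that $\xi$ trivializes over each $R_f[x]$ for a Zariski cover $\{\Spec R_f\}$ of $\Spec R$, you conclude that $\xi$ is ``trivial in $H^1_{Nis}(R[x],H)$''. This does not follow: being trivial on a Zariski (or Nisnevich) cover is precisely what it means to \emph{have} a class in $H^1_{Zar}$ (resp.\ $H^1_{Nis}$), not for that class to be the base point. A nontrivial line bundle on $\Spec R$, pulled back to $R[x]$, already shows that Zariski-local triviality does not force global triviality; your argument at that point nowhere uses the hypothesis $\xi|_{x=0}=*$, which should have been a warning sign.

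The missing input is the Quillen-type patching principle for torsors: for a linear algebraic group $H$, an $H$-torsor over $R[x]$ is extended from $R$ if and only if this holds over $R_{\mathfrak m}[x]$ for every maximal ideal $\mathfrak m$ of $R$. This is \cite[Theorem 3.2.5]{AHW} (see also \cite[Korollar 3.5.2]{Mo}); linearity of $H$ over $R$ is supplied by \cite[Corollary 3.2]{Thomason}. With this in hand your argument goes through, since over each regular local $\mathcal O$ you correctly show $\xi$ is trivial, hence extended from $\mathcal O$, and ``extended from $R$'' together with $\xi|_{x=0}=*$ gives $\xi=*$. For comparison, the paper dispenses with the twisting: after localizing it proves the stronger fact that \emph{every} Nisnevich $G$-torsor over $\mathcal O[x]$ is trivial, by passing to $K[x]$ via Theorem~\ref{thm:ker-poly}, then to $K(x)$ via \cite[Proposition 2.2]{CTO}, and using that Nisnevich torsors over a field are trivial. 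Locally every torsor is then trivially extended, and the same Quillen patching yields the global bijection directly.
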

\begin{proof}
We need to show that any Nisnevich $G$-torsor $E$ over $R[x]$ is extended from $R$. By~\cite[Corollary 3.2]{Thomason} $G$
is linear, hence by the local-global principle for torsors~\cite[Theorem 3.2.5]{AHW} (see also~\cite[Korollar 3.5.2]{Mo})
it is enough to prove the same claim for every maximal localization of $R$. Thus, we can assume that $R$ is regular local.
By Theorem~\ref{thm:ker-poly} $H^1_{\et}(R[x],G)\to H^1_{\et}(K[x],G)$ has trivial kernel.
By~\cite[Proposition 2.2]{CTO} $H^1_{\et}(K[x],G)\to H^1_{\et}(K(x),G)$ has trivial kernel.
Hence
$H^1_{Nis}(R[x],G)\to H^1_{Nis}(K(x),G)$ has trivial kernel. Since every Nisnevich torsor over $K(x)$ is trivial,
therefore, every Nisnevich torsor over $R[x]$ is trivial, and hence extended from $R$.
\end{proof}

\begin{proof}[Proof of Corollary~\ref{cor:torsors}]
The first statement follows from Theorem~\ref{thm:Nis-bij} and Theorem~\ref{thm:H1} (ii). The second statement follows
from Theorem~\ref{thm:et-ker} and Theorem~\ref{thm:H1} (ii).
\end{proof}

\section{Non-stable $K_1$-functors}\label{sec:K1}

Let $R$ be a commutative ring with 1. Let $G$ be an isotropic reductive group scheme over $R$, and
let $P$ be a parabolic subgroup of $G$ in the sense of~\cite{SGA3}.
Since the base $\Spec R$ is affine, the group $P$ has a Levi subgroup $L_P$~\cite[Exp.~XXVI Cor.~2.3]{SGA3}.
There is a unique parabolic subgroup $P^-$ in $G$ which is opposite to $P$ with respect to $L_P$,
that is $P^-\cap P=L_P$, cf.~\cite[Exp. XXVI Th. 4.3.2]{SGA3}.  We denote by $U_P$ and $U_{P^-}$ the unipotent
radicals of $P$ and $P^-$ respectively.

\begin{dfn}\label{defn:E_P}\cite{PS}
The \emph{elementary subgroup $E_P(R)$ corresponding to $P$} is the subgroup of $G(R)$
generated as an abstract group by $U_P(R)$ and $U_{P^-}(R)$. We denote by $K_1^{G,P}(R)=G(R)/E_P(R)$ the pointed set of
cosets $gE_P(R)$, $g\in G(R)$.
\end{dfn}

Note that if $L'_P$ is another Levi subgroup of $P$,
then $L'_P$ and $L_P$ are conjugate by an element $u\in U_P(R)$~\cite[Exp. XXVI Cor. 1.8]{SGA3}, hence
the group
$E_P(R)$ and the set $K_1^{G,P}(R)$ do not depend on the choice of a Levi subgroup or an opposite subgroup
$P^-$ (and so we do not include $P^-$ in the notation).

The following lemma generalizes~\cite[Theorem 2.1 (ii)]{Vo-SeHo}.

\begin{lem}\label{lem:milnor-sq-K1}
Let $G$ be a reductive
group scheme over a commutative ring $R$, and let $P$ be a proper parabolic subgroup of $G$.
For any Milnor square of $R$-algebras~\eqref{eq:milnor-sq}, where $j_1$ is surjective,
the induced map of sets
\begin{equation}\label{eq:map-K}
K_1^{G,P}(A)\xrightarrow{(i_1,i_2)} K_1^{G,P}(A_1)\times_{K_1^{G,P}(B)} K_1^{G,P}(A_2)
\end{equation}
is surjective.
If, moreover, $j_1$ is split surjective with $R$-algebra section map $s:B\to A_1$, and $j_2$ is surjective, then the induced square
\begin{equation}\label{eq:K-sq}
\xymatrix@R=20pt@C=35pt{
K_1^{G,P}(A)\ar[d]^{i_1}\ar[r]^{i_2}&K_1^{G,P}(A_2)\ar[d]^{j_2}\\
K_1^{G,P}(A_1)\ar[r]^{j_1}&K_1^{G,P}(B)\\
}
\end{equation}
is a Cartesian square of sets.
\end{lem}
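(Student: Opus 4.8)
The plan rests on two patching facts. Since $G$ is affine and $A=A_1\times_B A_2$, the functor of points gives $G(A)=G(A_1)\times_{G(B)}G(A_2)$, and the same identity holds verbatim for the affine schemes $U_P$ and $U_{P^-}$. Moreover, by~\cite[Exp.~XXVI]{SGA3} the schemes $U_P$ and $U_{P^-}$ are isomorphic as $R$-schemes to affine spaces, so any surjection of $R$-algebras induces a surjection on their points. From these two facts I extract the lifting statements used throughout: if $\phi\colon C\to D$ is a surjection of $R$-algebras then $E_P(C)\to E_P(D)$ is surjective (lift each generator lying in $U_P(D)$ or $U_{P^-}(D)$); and the projections $i_1\colon E_P(A)\to E_P(A_1)$ and $i_2\colon E_P(A)\to E_P(A_2)$ are surjective provided $j_2$, respectively $j_1$, is surjective, since given a point on one factor its image in $U_P(B)$ lifts through the other surjection to a compatible pair.

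For the first assertion, take $(\bar g_1,\bar g_2)$ in the fibre product, so that $j_1(g_1)^{-1}j_2(g_2)\in E_P(B)$. Using that $j_1$ is surjective, lift this element to some $e\in E_P(A_1)$ and replace $g_1$ by $g_1e$; this leaves $\bar g_1\in K_1^{G,P}(A_1)$ unchanged but arranges $j_1(g_1)=j_2(g_2)$ exactly in $G(B)$. The patching $G(A)=G(A_1)\times_{G(B)}G(A_2)$ then yields $g\in G(A)$ with $i_1(g)=g_1$ and $i_2(g)=g_2$, and its class $\bar g$ is the desired preimage.

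For the Cartesian claim it remains to prove injectivity, i.e.\ that any $f\in G(A)$ with $i_1(f)\in E_P(A_1)$ and $i_2(f)\in E_P(A_2)$ already lies in $E_P(A)$ (one applies this to $f=g^{-1}h$). As $j_1$ is surjective, $i_2\colon E_P(A)\to E_P(A_2)$ is surjective, so after multiplying $f$ by a suitable element of $E_P(A)$ I may assume $i_2(f)=1$; then $f=(\delta,1)$ with $\delta\in E_P(A_1)$ and $j_1(\delta)=1$. Writing $\delta=v_1\cdots v_m$ with each $v_k\in U_P(A_1)\cup U_{P^-}(A_1)$ and using the section $s$ of $j_1$, a telescoping with partial products $p_{k-1}=v_1\cdots v_{k-1}$ and correction terms $s_*(j_1(v_k))$ rewrites $\delta=\prod_k p_{k-1}w_kp_{k-1}^{-1}$ in a suitable order, where $w_k=v_k\,s_*(j_1(v_k))^{-1}$ again lies in $U_P(A_1)\cup U_{P^-}(A_1)$ and satisfies $j_1(w_k)=1$. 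Each pair $(w_k,1)$ therefore lies in $U_P(A)\cup U_{P^-}(A)\subseteq E_P(A)$; and, because $j_2$ is surjective, each conjugator $p_{k-1}\in E_P(A_1)$ lifts to some $\hat p_{k-1}\in E_P(A)$ with $i_1(\hat p_{k-1})=p_{k-1}$. Hence $(p_{k-1}w_kp_{k-1}^{-1},1)=\hat p_{k-1}(w_k,1)\hat p_{k-1}^{-1}\in E_P(A)$, and the product of these elements equals $(\delta,1)=f$, so $f\in E_P(A)$.

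The main obstacle is exactly this last lifting of conjugators: the telescoping unavoidably produces conjugation by elements $p_{k-1}$ of $E_P(A_1)$, which a priori have no counterpart over $A$. The surjectivity of $j_2$ is precisely what forces $i_1\colon E_P(A)\to E_P(A_1)$ to be surjective and thereby realizes these conjugations inside $E_P(A)$, while the section $s$ of $j_1$ is what allows the telescoping to produce correction terms $w_k$ annihilated by $j_1$; thus both extra hypotheses of the second assertion are used, and neither appears dispensable. A minor point to verify along the way is that $U_P$ and $U_{P^-}$ genuinely have the affine-space structure that underlies all the point-lifting, which is where the reference to~\cite{SGA3} is needed.
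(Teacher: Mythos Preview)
Your argument is correct and follows the same outline as the paper's proof: the surjectivity part is identical, and for injectivity you both reduce to an element $f=(\delta,1)$ with $\delta\in E_P(A_1)$, $j_1(\delta)=1$, and show $f\in E_P(A)$.

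One computational slip: the telescoping identity does not hold with the conjugators $p_{k-1}=v_1\cdots v_{k-1}$ you wrote down. Setting $s_k=s_*(j_1(v_k))$ and $w_k=v_ks_k^{-1}$, the correct identity is
\[
\delta=v_1\cdots v_m=\prod_{k=1}^{m}\,q_{k-1}\,w_k\,q_{k-1}^{-1},\qquad q_{k-1}=s_1\cdots s_{k-1}\in E_P(s(B)),
\]
since $q_m=s_*(j_1(\delta))=1$. With these $q_{k-1}$ in place of $p_{k-1}$ your lifting step goes through verbatim (indeed slightly more cheaply, as the $q_{k-1}$ lie in $E_P(s(B))$ and lift via the induced section $r\colon A_2\to A$ without needing the full surjectivity of $i_1$ on $E_P$). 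This telescoping is exactly the content of the identity
\[
G(A_1,\ker j_1)\cap E_P(A_1)=E_P(\ker j_1)^{E_P(s(B))}
\]
that the paper quotes from~\cite[Lemma~4.1]{St-poly} as a black box; you have simply reproved the needed inclusion by hand. After that, the paper lifts $E_P(\ker j_1)$ into $E_P(\ker i_2)$ and the conjugators through $j_2$, which is your $(w_k,1)\in E_P(A)$ and $\hat q_{k-1}\in E_P(A)$ in different packaging.
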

\begin{proof}
To prove surjectivity of~\eqref{eq:map-K}, let $g_1\in G(A_1)$, $g_2\in G(A_2)$ be such that $j_1(g_1)\in j_2(g_2)E_P(B)$. Since $E_P(A_1)$ surjects onto $E_P(B)$,
adjusting $g_1$ we obtain $j_1(g_1)=j_2(g_2)$. Since $G$ is left exact, there is $g\in G(A)$ such that $i_1(g)=g_1$ and  $i_2(g)=g_2$.

Next, assume that
$j_1$ is split surjective and $j_2$ is surjective, then
all four homomorphisms of~\eqref{eq:milnor-sq} are surjective, and $i_2$ is also split.
Let $g_1,g_2\in G(A)$ be such that $i_1(g_1)\in i_1(g_2)E_P(A_1)$ and
$i_2(g_1)\in i_2(g_2)E_P(A_2)$. Then $i_1(g_2^{-1}g_1)\in E_P(A_1)$ and $i_2(g_2^{-1}g_1)\in E_P(A_2)$. Then $g=g_2^{-1}g_1$
satisfies $i_1(g)\in E_P(A_1)$ and $i_2(g)\in E_P(A_2)$. We are going to show that $g\in E_P(A)$.

Since $i_2$ is surjective, adjusting $g$ by an element of $E_P(A)$, we can assume that $i_2(g)=1$. Let $s:B\to A_1$
be a splitting of $j_1$. By~\cite[Lemma 4.1]{St-poly} one has
$$
G(A_1,\ker(j_1))\cap E_P(A_1)=E_P(A_1,\ker(j_1))=E_P(\ker(j_1))^{E_P(s(B))}.
$$
Since $j_1(i_1(g))=j_2(i_2(g))=1$, one has $i_1(g)\in E_P(\ker(j_1))^{E_P(s(B))}$. Since the square is Cartesian,
$\ker(j_1)\subseteq i_1(\ker(i_2))$. Therefore, we can lift any element of $E_P(\ker(j_1))$ to an element of
$E_P(\ker(i_2))$. Since, moreover, $j_2$ is surjective, $i_1(g)$ has a preimage in $E_P(A,\ker(i_2))^{E_P(A)}=E_P(A,\ker(i_2))$. Since the square is Cartesian
and $G$ is left exact, we conclude that $g\in E_P(A,\ker(i_2))$. This finishes the proof.

\end{proof}

\begin{proof}[Proof of Theorem~\ref{thm:square-free}]
The claim follows immediately from Lemma~\ref{lem:dh-functor} and Lemma~\ref{lem:milnor-sq-K1} (ii).
\end{proof}

If $P$ is a strictly proper parabolic subgroup and $G$ has isotropic rank $\ge 2$, then $K_1^{G,P}$ is group-valued
and independent of $P$.

\begin{dfn}
A parabolic subgroup $P$ in $G$ is called
\emph{strictly proper}, if it intersects properly every normal semisimple subgroup of $G$.
\end{dfn}

\begin{thm}\label{th:PS-normality}\cite[Lemma 12, Theorem 1]{PS}
Let $G$ be a reductive group over a commutative ring $R$, and let $A$ be a commutative $R$-algebra.
If for any maximal ideal $m$ of $R$ the isotropic rank of $G_{R_m}$ is $\ge 2$,
then the subgroup $E_P(A)$ of $G(A)$ is the same for any
strictly proper parabolic $A$-subgroup $P$ of $G_A$, and is normal in $G(A)$.
\end{thm}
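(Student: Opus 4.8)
The plan is to deduce both assertions from the theory of relative root subgroups of the isotropic group $G$. Since both claims may be checked after localizing $R$ at its maximal ideals $m$ --- which is exactly where the rank hypothesis is imposed --- I would first fix the radical torus $S$ of a Levi subgroup of a minimal strictly proper parabolic and let it act by conjugation on $G$. The weight decomposition of $\Lie(G)$ under $S$ produces a relative root system $\Phi$, and to each $\alpha\in\Phi$ one attaches a relative root subgroup $U_\alpha\subseteq G$ (in general non-abelian); the unipotent radical of any parabolic is then the product of the $U_\alpha$ over the roots $\alpha$ positive for that parabolic, so that $E_P(A)$ is generated by the $U_\alpha(A)$ with $\alpha$ occurring in $U_P$ or $U_{P^-}$. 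The central computational tool I would establish is the scheme-theoretic Chevalley commutator formula
\[
[U_\alpha(A),U_\beta(A)]\subseteq \prod_{\substack{i,j>0 \\ i\alpha+j\beta\in\Phi}} U_{i\alpha+j\beta}(A),
\]
proved by a weight-space computation on the coordinate ring of $G$.

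For the independence of $P$, I would show that each $E_P(A)$ with $P$ strictly proper coincides with the subgroup $E(A)$ generated by \emph{all} the $U_\alpha(A)$, $\alpha\in\Phi$, which is manifestly independent of $P$. The inclusion $E_P(A)\subseteq E(A)$ is immediate, since $U_P$ and $U_{P^-}$ are products of root subgroups. For the reverse inclusion one must check that $U_\alpha(A)\subseteq E_P(A)$ for the roots $\alpha$ living in the Levi factor $L_P$ rather than in $U_P$ or $U_{P^-}$. This is precisely where the hypothesis that every semisimple normal subgroup has isotropic rank $\ge 2$ enters: it excludes rank-one simple components of $\Phi$, and thereby guarantees for each such $\alpha$ a decomposition $\alpha=\beta+\gamma$ into roots $\beta,\gamma$ with $U_\beta(A),U_\gamma(A)\subseteq E_P(A)$, whence $U_\alpha(A)\subseteq E_P(A)$ by the commutator formula.

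The normality of $E_P(A)$ in $G(A)$ is the main obstacle, and it is substantially harder than independence: since $G(A)$ is in general not generated by unipotent elements --- this being the whole point of the non-stable $K_1$-functor --- normality cannot be verified on a generating set of $G(A)$. The plan is to show that each generator $U_\alpha(A)$ is carried into $E_P(A)$ under conjugation by an arbitrary $g\in G(A)$. Conjugation by a Levi element preserves the grading and merely permutes and rescales the root subgroups, so it is harmless; the real difficulty is conjugation by elements lying outside the big cell $U_{P^-}L_PU_P$. To control these I would pass to the relative elementary subgroups $E_P(A,\mathfrak a)$ and invoke the identity $G(A_1,\ker(j_1))\cap E_P(A_1)=E_P(A_1,\ker(j_1))$ of~\cite[Lemma 4.1]{St-poly}, combined with a Quillen--Suslin-type localization on $\Spec A$ that reduces conjugation by a general $g$ to the generic case where $g$ does lie in the big cell. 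The rank $\ge 2$ assumption is indispensable at this step as well: already for $G=\SL_2$ the subgroup $E_P(A)$ need not be normal, and it is exactly the additional commutator relations available once $\Phi$ has rank $\ge 2$ that let the conjugates be absorbed back into $E_P(A)$.
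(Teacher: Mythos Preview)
The paper itself gives no proof of this statement: it is quoted from Petrov--Stavrova~\cite{PS}, with the citation placed in the theorem header, and is used as a black box. So there is no in-paper argument to compare against; the relevant benchmark is the original proof in~\cite{PS}.

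Your outline is broadly in the spirit of~\cite{PS}: the machinery of relative root subgroups attached to a maximal split subtorus of a Levi, together with a scheme-theoretic Chevalley commutator formula, is exactly the framework used there, and the independence claim (their Lemma~12) goes essentially as you describe, via the rank~$\ge 2$ hypothesis guaranteeing that every relative root in the Levi can be reached by commutators of roots already lying in $U_P$ or $U_{P^-}$.

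There are, however, real gaps in the normality part. First, your opening reduction --- ``both claims may be checked after localizing $R$ at its maximal ideals'' --- is not available a priori: membership in $E_P(A)$ is defined by generators over $A$, and the assertion that normality of a subgroup of $G(A)$ can be tested Zariski-locally on $\Spec R$ (or $\Spec A$) is one of the nontrivial \emph{outputs} of the theory, not an input. Second, invoking~\cite[Lemma~4.1]{St-poly} here is circular: that paper postdates~\cite{PS} and relies on its results, in particular on precisely the normality theorem you are trying to prove. Third, and most importantly, the actual mechanism in~\cite{PS} is not a direct reduction to ``$g$ lies in the big cell''. One introduces an auxiliary polynomial variable $t$, shows by a root-subgroup computation that $gU_\alpha(tA_m[t])g^{-1}\subseteq E_P(A_m[t])$ over each localization $A_m$, and then uses a Suslin--Quillen style dilation/local-global lemma for the relative elementary group $E_P(A[t],tA[t])$ to patch these local statements and specialize $t\mapsto 1$. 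Your sketch gestures at localization but does not supply this mechanism, which is the heart of the argument and where most of the work in~\cite{PS} lies.
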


\begin{dfn}
Let $G$ be a reductive group of isotropic rank $\ge 2$ over a commutative ring $R$.
For any strictly proper parabolic subgroup $P$ of $G$ over $R$, and any $R$-algebra $A$,
we call the subgroup $E(A)=E_P(A)$, where $P$ is a strictly proper parabolic subgroup of $G$,
the \emph{elementary subgroup}  of $G(A)$.
The functor $K_1^G$ on the category of commutative $R$-algebras $A$, given by $K_1^G(A)=G(A)/E(A)$,
is called the \emph{non-stable $K_1$-functor} associated to $G$.
\end{dfn}
The normality of the elementary subgroup implies that $K_1^G$ is a group-valued functor.

\begin{proof}[Proof of Corollary~\ref{cor:square-free}]
Under the given assumptions $K_1^G(R[x_1,\ldots,x_n])=K_1^G(R)$ for any $n\ge 1$ by~\cite[Theorem 1.2]{St-poly}
and~\cite[Theorem 1.3]{St-poly} respectively. Hence the claim follows from Theorem~\ref{thm:square-free}.
\end{proof}

\begin{proof}[Proof of Corollary~\ref{cor:split-sc}]
Let $A=R[x_1,\ldots,x_n]/I$, where $I$ is any ideal generated by monomials.
By Remark~\ref{rem:nilp} there is a square-free discrete Hodge algebra $A'=A/J$ over $R$, where $J$ is an ideal of $A$ generated by a finite set
of nilpotent elements.
Let $B^-$ be a Borel $R$-subgroup of $G$ opposite to $B$, let
$U_B$ and $U_{B^-}$ be their unipotent radicals, and let $T=B\cap B^-$
be a maximal torus of $G$.
By Theorem~\ref{thm:square-free} we know that $K_1^{G,B}(A')=K_1^{G,B}(R)$.
We need to show that the natural homomorphism $p:K_1^{G,B}(A)\to K_1^{G,B}(A')$ is injective.
Let $g\in G(A)$ be such that $p(g)\in E_B(A')$. Adjusting $g$ by an element of $E_B(A)$,
we can assume that $p(g)=1$.  Then $U_BTU_B^-\subseteq G$ is a principal open subscheme of $G$ corresponding to
a function $f\in R[G]$ and isomorphic to the direct product $U_B\times T\times U_{B^-}$,
see~\cite[Exp. XXVI, Remarque 4.3.6]{SGA3}
and~\cite[p.~9]{Ma}. Since $p(g)=1$, we have $f(p(g))\in (A/J)^\times$. Since $J$ is nilpotent,
it follows that $f(g)\in A^\times$, and hence $g\in U_B(A)T(A)U_{B^-}(A)$. Since $T$ is a split maximal torus and $G$
is simply connected semisimple, we have $T(A)\le E(A)$. Hence $g\in E(A)$, as required.
\end{proof}

\begin{lem}\label{lem:ker-K-seq}
Let $R$ be a Noetherian ring, and let
$J\subseteq R$ be an ideal such that $(R,J)$ is a Henselian pair.
Let $1\to T\to G'\to G\to 1$ be a short exact sequence of $R$-group schemes, where $T$ is a smooth $R$-group of multiplicative
type, and $G,G'$ be two reductive $R$-groups. Let $P'$ be a parabolic $R$-subgroup of $G'$, and let $P$ be its image in $G$.
There is a short exact sequence of groups
\begin{multline}\label{eq:ker-K-seq}
1\to \ker\bigl(T(R)\to T(R/J)\bigr)\to\ker\bigl(K_1^{G',P'}(R)\to K_1^{G',P'}(R/J)\bigr)\\
\to
\ker\bigl(K_1^{G,P}(R)\to K_1^{G,P}(R/J)\bigr)\to 1.
\end{multline}
\end{lem}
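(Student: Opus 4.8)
The plan is to rewrite all three kernels as \emph{relative} $K_1$-groups and then to run the snake lemma along $\pi\colon G'\to G$. For a surjection $R\to R/J$ write $G(R,J)=\ker\bigl(G(R)\to G(R/J)\bigr)$, and likewise $G'(R,J)$, $T(R,J)$; set $E_P(R,J)=E_P(R)\cap G(R,J)$ and $E_{P'}(R,J)=E_{P'}(R)\cap G'(R,J)$. Since $U_P$ and $U_{P^-}$ are iterated extensions of vector groups, the reductions $U_P(R)\to U_P(R/J)$ are surjective, hence so is $E_P(R)\to E_P(R/J)$; therefore every class in $\ker\bigl(K_1^{G,P}(R)\to K_1^{G,P}(R/J)\bigr)$ has a representative in $G(R,J)$, and the kernel is canonically $G(R,J)/E_P(R,J)$. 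Invoking Theorem~\ref{th:PS-normality} to guarantee normality of $E_P$ (so the $K_1$'s are group-valued), the assertion becomes a statement about the three groups $T(R,J)$, $G'(R,J)/E_{P'}(R,J)$ and $G(R,J)/E_P(R,J)$.

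Next I would produce the short exact sequence $1\to T(R,J)\to G'(R,J)\to G(R,J)\to 1$. The cohomology sequence of $1\to T\to G'\to G\to 1$ gives $G'(S)\to G(S)\xrightarrow{\ \delta\ }H^1_{\et}(S,T)$ for $S=R,R/J$. Because $T$ is smooth and $(R,J)$ is Henselian, the reduction $H^1_{\et}(R,T)\to H^1_{\et}(R/J,T)$ has trivial kernel and $T(R)\to T(R/J)$ is surjective (this is the same invariance of $H^1$ for smooth groups over a Henselian pair used in the proof of Theorem~\ref{thm:H1}(ii)); a short diagram chase then shows that $G'(R,J)\to G(R,J)$ is surjective with kernel exactly $G'(R,J)\cap T(R)=T(R,J)$. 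This is where the Henselian hypothesis is spent.

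The heart of the matter is the comparison of elementary subgroups along $\pi$. As $T$ is normal of multiplicative type in a group with connected fibres, it is central, so $\pi$ restricts to isomorphisms $U_{P'}\xrightarrow{\sim}U_P$ and $U_{P'^-}\xrightarrow{\sim}U_{P^-}$ compatible with reduction modulo $J$. Hence $\pi(E_{P'}(R))=E_P(R)$, $\pi$ carries $U_{P'}(R,J)$ isomorphically onto $U_P(R,J)$, and consequently $\pi$ maps $E_{P'}(R,J)$ \emph{onto} $E_P(R,J)$ — provided one has the relative standard commutator formula identifying $E_P(R)\cap G(R,J)$ with the normal closure in $E_P(R)$ of $U_P(R,J)$ and $U_{P^-}(R,J)$, the analogue of~\cite[Lemma 4.1]{St-poly}. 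The kernel of $\pi$ on elementary subgroups lies in the finite multiplicative-type group $\mu:=T\cap\der(G')$, and the injectivity of the left-hand map amounts to $\mu(R,J)=\ker\bigl(\mu(R)\to\mu(R/J)\bigr)=1$.

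Finally I would feed the two rows $1\to E_P(R,J)\to G(R,J)\to G(R,J)/E_P(R,J)\to1$ and its $G'$-analogue into the snake lemma along $\pi$. Surjectivity of the right-hand map follows immediately from the lifting established above, and middle exactness from the surjectivity $\pi\colon E_{P'}(R,J)\to E_P(R,J)$; the connecting terms vanish precisely because $\mu(R,J)=1$, which also yields injectivity of $T(R,J)$ into the middle term. I expect the main obstacle to be exactly this control of the elementary subgroups: establishing the relative commutator identification in the Henselian setting, together with the vanishing $\mu(R,J)=1$. The latter holds whenever $\mu$ is \'etale — in particular in characteristic $0$ — by rigidity of finite \'etale algebras over a Henselian pair; in characteristic $p$ it may fail (for instance for $\GL_p\to\GL_p/\Gm$, where $\mu=\mu_p$ is non-reduced and $(1+\eps)\cdot\mathrm{id}$ lies in $E_{P'}(R)\cap T(R,J)$ over $R=k[\eps]/\eps^p$), which is why the downstream applications carry a characteristic-$0$ or simply-connected hypothesis.
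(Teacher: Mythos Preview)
Your snake-lemma plan differs from the paper's direct coset-manipulation argument. For surjectivity the paper's route is shorter and avoids comparing relative elementary subgroups: given $g\in G(R)$ with $\bar g\in E_P(R/J)$, adjust by $E_P(R)$ so that $\bar g=1$, lift $g$ to $h\in G'(R)$ via injectivity of $H^1_{\et}(R,T)\to H^1_{\et}(R/J,T)$, then correct $h$ using $T(R)\twoheadrightarrow T(R/J)$ so that $\bar h=1$; the class $[h]$ lies in the middle kernel and maps to $[g]$. For middle exactness, however, the paper only adjusts $g\in G'(R)$ with $\pi(g)\in E_P(R)$ by an element of $E_{P'}(R)$ until $g\in T(R)$, and stops --- this shows $[g]$ comes from $T(R)$, not from $T(R,J)$. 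Closing that gap requires exactly the control you flag (surjectivity of $T(R)\cap E_{P'}(R)\to T(R/J)\cap E_{P'}(R/J)$, or equivalently of $E_{P'}(R,J)\to E_P(R,J)$), so the paper does not in fact bypass the relative commutator issue; it leaves it implicit. Note also that \cite[Lemma~4.1]{St-poly}, as invoked elsewhere in the paper, uses a ring-theoretic splitting of the quotient map, which $R\to R/J$ need not have, so your caution about that citation is warranted.

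Your injectivity analysis is sharper than the paper's, which simply writes $1\to T(S)\to K_1^{G',P'}(S)$ without justification. Your reduction to the vanishing of $\mu(R,J)$ for $\mu=T\cap\der(G')$ and your characteristic-$p$ counterexample $(1+\eps)\cdot\id$ for $\GL_p\to\mathrm{PGL}_p$ over $k[\eps]/\eps^p$ are both correct; the lemma as stated is not literally true in that generality. The applications in Corollary~\ref{cor:split} are unaffected because there either $G$ is simply connected (and the lemma is not invoked) or one is in characteristic~$0$, whence $\mu$ is \'etale and $\mu(R)\cong\mu(R/J)$ over the Henselian pair. One minor side point: you appeal to Theorem~\ref{th:PS-normality} to make the $K_1$'s group-valued, but the lemma as stated carries no isotropic-rank hypothesis; in the applications that hypothesis is present, so this is harmless.
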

\begin{proof}
Let $S$ be one of $R$, $R/J$. We have a short exact sequence of groups
$$
1\to T(S)\to G'(S)\to G(S)\to H^1_{fppf}(S,T).
$$
Since the image of $E_{P'}(S)$ in $G(S)$ coincides with $E_{P}(S)$, there is an induced sequence
$$
1\to T(S)\to K_1^{G',P'}(S)\to K_1^{G,P}(S).
$$
Hence the maps in~\eqref{eq:ker-K-seq} are well-defined, and it is a complex.

To prove the exactness of~\eqref{eq:ker-K-seq} at the third term, let $g\in G'(R)$ be such that
its image in $G(R)$ belongs to $E_{P}(R)$. Adjusting $g$ by an element of $E_{P'}(R)$, we can assume
that $g$ is in the kernel of $G'(R)\to G(R)$. Then $g$ belongs to the image of $T(R)$.

To prove the exactness of~\eqref{eq:ker-K-seq} at the fourth term, assume that $g\in G(R)$
maps to an element of
$E_P(R/J)$. Since $E_{P}(R)\to E_{P}(R/J)$ is surjective, adjusting $g$ we can assume that $g$ maps to $1\in G(R/J)$.
Since $T$ is smooth, we can replace its fppf cohomology with \'etale, and $H^1_{\et}(R,T)\cong H^1_{\et}(R/J,T)$
by~\cite{Gr-BrauerIII} (see also~\cite{Strano-henspairs}).
Hence $g$ lifts to an element $h\in G'(R)$. By assumption the image of $h$
in $G'(R/J)$ lifts to $T(R/J)$.
Since $(R,J)$ is a Henselian pair, and $T$ is smooth, the map
$T(R)\to T(R/J)$ is surjective.  Adjusting $h$ by an element of $T(R)$, we obtain a new preimage $h'\in G'(R)$ of $g$
such that its image in $G(R/J)$ is trivial. Hence $h'\in E_{P'}(R)$ and $g\in E_P(R)$, as required.

\end{proof}

\begin{proof}[Proof of Corollary~\ref{cor:split}]
Since $R$ contains a field, it contains a perfect field $k_0$. Since $G$ is split, it is defined over $k_0$.
Then $K_1^G(R[x_1,\ldots,x_n])=K_1^G(R)$ for any $n\ge 1$ by~\cite[Theorem 1.3]{St-poly}.
Let $G^{sc}$ be the simply connected cover of the semisimple group $G^{ss}=G/\rad(G)$.
Then by Corollary~\ref{cor:split-sc} $K_1^{G^{sc}}(A)=K_1^{G^{sc}}(R)$. If $G=G^{sc}$, this finishes the proof.

To treat the general case, recall that by Remark~\ref{rem:nilp} there is a square-free discrete Hodge algebra
$A'=A/J$ over $R$, where $J$ is an ideal of $A$ generated by a finite set of nilpotent elements. Clearly,
$(A,J)$ is a Henselian pair. There is a short
exact sequence $1\to C\to G^{sc}\to G^{ad}\to 1$, where
$C$ is an $R$-group of multiplicative type.
By Theorem~\ref{thm:square-free} $K_1^{G^{sc}}(A/J)=K_1^{G^{sc}}(R)$,
and $K_1^{G^{ss}}(A/J)=K_1^{G^{ss}}(R)$. In particular, $K_1^{G^{sc}}(A)=K_1^{G^{sc}}(A/J)$.
Hence by Lemma~\ref{lem:ker-K-seq} $K_1^{G^{ss}}(A)\to K_1^{G^{ss}}(A/J)$
has trivial kernel. Since $K_1^{G^{ss}}(A)\to K_1^{G^{ss}}(R)$ is obviously surjective, we conclude that
$K_1^{G^{ss}}(A)=K_1^{G^{ss}}(A/J)=K_1^{G^{ss}}(R)$. Applying Lemma~\ref{lem:ker-K-seq} again, we obtain
$$
\ker\bigl(\rad(G)(A)\to \rad(G)(A/J)\bigr)=\ker\bigl(K_1^G(A)\to K_1^G(A/J)\bigr).
$$
It remains to note that by Theorem~\ref{thm:square-free} $K_1^G(A/J)=K_1^G(R)$ and $\rad(G)(A/J)=\rad(G)(R)$.
\end{proof}

\begin{rem}\label{rem:long}
Let $G$ be a smooth group scheme over a commutative Noetherian ring $R$. Let $H$ be a normal subgroup subfunctor of the
functor represented by $G$ on the category of commutative finitely generated $R$-algebras, such that
for any surjective homomorphism $\phi:A\to B$ of such algebras one has $H(B)\le \phi(G(A))$. (If $G$
is a reductive group of isotropic rank $\ge 2$, one can take $H=E$.)
For any Milnor square of $R$-algebras~\eqref{eq:milnor-sq} there is an exact sequence
of pointed sets
\begin{multline*}
G(A)/H(A)\xrightarrow{\st (i_1,i_2)} G(A_1)/H(A_1)\times G(A_2)/H(A_2)
\xrightarrow{\st \lambda} G(B)/H(B)\to\\
\xrightarrow{\delta} H^1_{\textrm{\'et}}(A,G)\xrightarrow{\st (i_1^*,i_2^*)}
H^1_{\textrm{\'et}}(A_1,G)\times H^1_{\textrm{\'et}}(A_2,G).
\end{multline*}
Here the map $\lambda$ is defined by $\lambda(g_1,g_2)=j_1(g_1){j_2(g_2)}^{-1}$, and  $\delta$ sends the class of $g\in G(B)$ to the
$G$-torsor $G_{A_1}\cup_{g}G_{A_2}$ over $A$ constructed in Lemma~\ref{lem:milnor-patch}. The map $\delta$ is well-defined,
since any $g\in H(B)\le G(B)$ lifts to $g'\in H(A_1)\le G(A_1)$, and
replacing $G_{A_1}$ by the isomorphic $G$-torsor $(g'^{-1})^*(G_{A_1})$, we conclude that $G_{A_1}\cup_{g}G_{A_2}$
is isomorphic to a trivial $G$-torsor. The exactness is straightforward.
\end{rem}

\renewcommand{\refname}{References}

\end{document}